\newtheorem{lemma}{Lemma}[section]
\newtheorem{theorem}[lemma]{Theorem}
\newtheorem{assumption}[lemma]{Assumption}
\newtheorem{proposition}[lemma]{Proposition}
\newtheorem{conjecture}[lemma]{Conjecture}
\newtheorem{corollary}[lemma]{Corollary}
\theoremstyle{definition}
\newtheorem{definition}[lemma]{Definition}
\numberwithin{equation}{section}
\numberwithin{figure}{section}
\newcommand{\nix}{{\mbox{\vphantom x}}}
\newcommand{\Aset}{\mathcal{A}}
\newcommand{\Bset}{\mathcal{B}}
\newcommand{\Cset}{\mathcal{C}}
\newcommand{\Lset}{\mathcal{L}}
\newcommand{\Sset}{\mathcal{S}}
\newcommand{\Xset}{\mathcal{X}}
\newcommand{\md}{$\widetilde{\ell}$-m}
\newcommand{\wti}[1]{{\widetilde #1}}
\newcommand{\upperRomannumeral}[1]{\uppercase\expandafter{\romannumeral#1}}
\title{A new proof of the Gasca - Maeztu conjecture for n = 5}
\author{Gagik Vardanyan}
\date{July 2021}
\begin{document}

\maketitle
\begin{abstract}  An $n$-correct node set $\mathcal{X}$ is called $GC_n$ set if the fundamental polynomial of each node is a product of $n$ linear factors. In 1982 Gasca and Maeztu conjectured that for every $GC_n$ set there is a line passing through $n+1$ of its nodes.So far, this conjecture has been confirmed only for $n\le 5.$ The case $n = 4,$ was first proved by J. R. Bush in 1990 . Several other proofs have been published since then. For the case $n=5$ there is only one proof: by H. Hakopian, K. Jetter and G. Zimmermann (Numer Math $127,685--713, 2014$). Here we present a second, much shorter and easier proof.
\end{abstract}

{\bf Keywords:} {Polynomal interpolation; the Gasca-Maeztu conjecture; $n$-correct set; $GC_n$ set; maximal line.}

{\bf Mathematics Subject Classification} (2000) 41A05; 41A63.

\section{Introduction}
Denote by $\Pi_n$ the space of bivariate polynomials of total degree
at most $n:$
\begin{equation*}
\Pi_n=\left\{\sum_{i+j\leq{n}}a_{ij}x^iy^j
\right\},\quad N:=\dim \Pi_n=\binom{n+2}{2}.
\end{equation*}

\noindent Consider a set of distinct nodes
$\Xset_s=\{ (x_1, y_1), (x_2, y_2), \dots , (x_s, y_s) \} .
$

The problem of finding a polynomial $p \in \Pi_n$ which satisfies
the conditions
\begin{equation}\label{int cond}
p(x_i, y_i) = c_i, \ \ \quad i = 1, 2, \dots s  ,
\end{equation}
is called interpolation problem.
\begin{definition}
The interpolation problem with the set of nodes $\Xset_s$ is called
$n$-poised if for any data $\{c_1, \dots, c_s\}$ there exists a
unique polynomial $p \in \Pi_n$, satisfying the conditions
\eqref{int cond}.
\end{definition}

A necessary condition of
$n$-poisedness is: $\#\Xset_s=s = N.$ If this latter equality takes place then the following holds:

\begin{proposition} \label{poisedii}
A set of nodes $\Xset_N$ is $n$-poised if and only if 
$$p \in \Pi_n,\ p(x_i,
y_i) = 0 \quad i = 1, \dots , N \implies p = 0.$$
\end{proposition}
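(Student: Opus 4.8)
The plan is to reduce the statement to a single elementary fact: a linear map between two vector spaces of the same finite dimension is injective if and only if it is bijective. First I would make the dimension count explicit by fixing a basis of $\Pi_n$ — for instance the monomials $x^iy^j$ with $i+j\le n$ — so that $\dim\Pi_n=N$, and then introduce the evaluation (sampling) operator
\[
 L\colon \Pi_n \to \mathbb{R}^N, \qquad L(p)=\bigl(p(x_1,y_1),\dots,p(x_N,y_N)\bigr).
\]
This $L$ is manifestly linear in $p$, and by construction its domain $\Pi_n$ and its codomain $\mathbb{R}^N$ both have dimension $N$. The whole argument will hinge on this coincidence of dimensions, which is exactly what the standing hypothesis $\#\Xset_N=N=\dim\Pi_n$ supplies.

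The next step is to translate both sides of the claimed equivalence into properties of $L$. By the definition of $n$-poisedness, the interpolation problem with nodes $\Xset_N$ is $n$-poised precisely when for every datum $(c_1,\dots,c_N)$ there exists a unique $p\in\Pi_n$ with $L(p)=(c_1,\dots,c_N)$; that is, exactly when $L$ is a bijection. On the other hand, the implication on the right of the stated equivalence asserts that the only $p\in\Pi_n$ with $L(p)=0$ is $p=0$, i.e. $\ker L=\{0\}$, which is injectivity of $L$. So the proposition is equivalent to the assertion that $L$ is bijective if and only if $L$ is injective.

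Finally I would close the loop using the rank–nullity theorem, $\dim\ker L+\operatorname{rank}L=\dim\Pi_n=N$. If $L$ is injective then $\dim\ker L=0$, hence $\operatorname{rank}L=N=\dim\mathbb{R}^N$, so $L$ is also surjective and therefore bijective; the converse is trivial since a bijection is in particular injective. There is no real obstacle here — the content is entirely linear-algebraic — but the point worth emphasizing is that the equivalence genuinely depends on domain and codomain having the \emph{same} dimension $N$: for $s<N$ the map $L$ could be injective without being surjective, and for $s>N$ surjective without being injective, so the cardinality condition $s=N$ is doing the essential work.
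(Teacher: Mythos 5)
Your proof is correct. The paper states this proposition without proof, treating it as a well-known fact, so there is nothing to compare against in the text itself; your argument --- identifying $n$-poisedness with bijectivity of the evaluation map $L\colon \Pi_n \to \mathbb{R}^N$, the displayed implication with injectivity ($\ker L = \{0\}$), and closing the equivalence via rank--nullity using $\dim \Pi_n = N$ --- is exactly the standard argument one would supply, and your remark that the hypothesis $\#\Xset_N = N$ is what makes injectivity equivalent to surjectivity is the right point to emphasize.
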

A polynomial $p \in \Pi_n$ is called an $n$-fundamental polynomial
for a node $ A = (x_k, y_k) \in \Xset_s$ if
\begin{equation*}
p(x_i, y_i) = \delta _{i k},\  i = 1, \dots , s ,
\end{equation*}
where $\delta$ is the Kronecker symbol. We denote the
$n$-fundamental polynomial of $A \in\Xset_s$ by $p_A^\star=p_{A,\Xset}^\star.$

\begin{definition}
A set of nodes $\Xset_s$ is called $n$-independent if all its nodes
have $n$-fundamental polynomials. Otherwise, $\Xset_s$ is called
$n$-dependent.
A set of nodes $\Xset_s$ is called essentially $n$-dependent if none of its nodes
has $n$-fundamental polynomial.
\end{definition}
Fundamental polynomials are linearly independent. Therefore a
necessary condition of $n$-indepen-dence is $\#\Xset_s=s \le N.$ 

One can readily verify that a node set $\Xset_s$
is $n$-independent if and only if the interpolation problem
\eqref{int cond} is solvable, meaning that for any data $\{c_1, \dots , c_s
\}$ there exists a (not necessarily unique) polynomial $p \in \Pi_n$
satisfying the conditions \eqref{int cond}.

A plane algebraic curve is the zero set of some bivariate polynomial of degree $\ge 1.$~To simplify notation, we shall use the same letter,  say $p$,
to denote the polynomial $p$ of degree $\ge 1$ and the curve given by the equation $p(x,y)=0$.
In particular, by $\ell,$ we denote a linear 
polynomial $\ell\in\Pi_1$ and the line defined by the equation
$\ell(x, y)=0.$

\begin{definition} Let $\Xset$ be an $n$-poised set.
We say, that a node $A\in\Xset$
uses a line $\ell$, if $\ell$ is a factor of the fundamental
polynomial $p_{A}^\star,$ i.e., $p_{A}^\star=\ell q,$ where $q\in\Pi_{n-1}.$
\end{definition}

Since the fundamental polynomial of a node in an $n$-poised set is unique we
get
\begin{lemma}[\cite{HJZ09b},  Lemma 2.5] \label{lm}
Suppose $\Xset$ is a poised set and a node  $A\in \Xset$ uses a line $\ell.$ Then $\ell$ passes through at least two nodes from $\Xset$, at which $q$ does not vanish.
 \end{lemma}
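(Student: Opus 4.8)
The plan is to argue by contradiction, assuming that $\ell$ passes through at most one node of $\Xset$ at which $q$ does not vanish, and to derive a violation of poisedness through a dimension count.

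First I would record the elementary consequences of $p_A^\star=\ell q$. Since $p_A^\star(A)=1$, both factors are nonzero at $A$; in particular $\ell(A)\neq0$, so $A$ does not lie on $\ell$, and $q(A)\neq0$. For every other node $B\neq A$ that is off the line we have $\ell(B)\neq0$, so $p_A^\star(B)=\ell(B)q(B)=0$ forces $q(B)=0$. Consequently the only nodes at which $q$ can be nonzero are $A$ itself together with the nodes lying on $\ell$. Writing $j$ for the number of nodes of $\Xset$ on $\ell$ at which $q\neq0$, this means $q$ vanishes at exactly $N-1-j$ of the nodes, so the whole lemma is equivalent to the inequality $j\ge2$.

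The main step is a counting bound: a nonzero $q\in\Pi_{n-1}$ can vanish at most $N-3$ of the nodes of an $n$-poised set. To see this, set $m=\deg q\le n-1$ and consider the linear map $r\mapsto qr$ from $\Pi_{n-m}$ into $\Pi_n$. It is injective (multiplication by the nonzero polynomial $q$ has trivial kernel), so its image $W:=q\,\Pi_{n-m}$ has dimension $\dim\Pi_{n-m}=\binom{n-m+2}{2}$, and every element of $W$ vanishes at every node lying on the curve $q=0$. If $t$ nodes lie on that curve, then, because the evaluation map $\Pi_n\to\mathbb{R}^N$ is injective by Proposition \ref{poisedii}, its restriction to $W$ is injective with image contained in the space of vectors that vanish in the $t$ corresponding coordinates, a space of dimension $N-t$. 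Hence $\binom{n-m+2}{2}\le N-t$, and since $m\le n-1$ gives $\binom{n-m+2}{2}\ge\binom{3}{2}=3$, we conclude $t\le N-3$.

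Combining the two observations, the number $N-1-j$ of nodes at which $q$ vanishes is at most $N-3$, whence $j\ge2$; the $j$ nodes on $\ell$ at which $q\neq0$ are exactly the nodes required by the statement. The point needing care is the counting bound, which is the crux of the argument, while the remaining steps are bookkeeping. I would also note that this single bound disposes of the degenerate possibilities uniformly: the case $j=0$, in which $q$ would vanish on all of $\Xset\setminus\{A\}$, and even the borderline $n=1$ case, where $\binom{n-m+2}{2}=\binom{3}{2}=N$ forces $t=0$. Thus no separate treatment of small cases is required.
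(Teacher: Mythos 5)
Your proof is correct, but it follows a genuinely different route from the paper's. The paper does not argue via dimensions at all: it derives the lemma (citing \cite{HJZ09b}, Lemma 2.5) from the \emph{uniqueness} of the fundamental polynomial in a poised set. In that argument, if $q$ vanished at all nodes of $\ell\cap\Xset$ except at most one node $B$, then $q$ (multiplied, in the one-node case, by an arbitrary auxiliary line through $B$ avoiding $A$) would, after normalization, be a second fundamental polynomial for $A$; uniqueness then forces $\ell q$ to be proportional to this polynomial, making $\ell$ either constant or equal to every such auxiliary line, a contradiction. Your argument instead reduces the lemma to the inequality $j\ge 2$ and proves the general counting bound that a nonzero $q\in\Pi_{n-1}$ of degree $m$ vanishes at no more than $N-\binom{n-m+2}{2}\le N-3$ nodes, using injectivity of multiplication by $q$ on $\Pi_{n-m}$ together with injectivity of the evaluation map $\Pi_n\to\mathbb{R}^N$ guaranteed by Proposition \ref{poisedii}. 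Both proofs rest only on poisedness, and your steps check out (including the reduction showing $q$ can be nonzero only at $A$ and at nodes of $\ell$, and the degenerate cases $j=0$ and $n=1$). What your route buys is a self-contained linear-algebra argument that never invokes uniqueness of $p_A^\star$ and yields a stronger, reusable fact: any curve of degree $m\le n-1$ contains at most $N-\binom{n-m+2}{2}$ nodes of an $n$-poised set. What the paper's route buys is brevity and elementarity: one line once uniqueness of the fundamental polynomial is granted.
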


\begin{definition} Let $\Xset$ be a set of nodes.
We say, that a line $\ell$ is a $k$-node line if it passes through exactly $k$ nodes of $\Xset:\ \ell\cap\Xset=k.$\end{definition}

The following proposition is well-known (see e.g. \cite{HJZ09a}
Proposition 1.3):
\begin{proposition}\label{pointsell}
Suppose that a polynomial $p \in
\Pi_n$ vanishes at $n+1$ points of a line $\ell.$ Then we have that
$
p = \ell  r  ,\ \text{where} \ r\in\Pi_{n-1}.
$
\end{proposition}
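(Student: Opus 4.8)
The plan is to reduce the bivariate statement to the univariate fact that a nonzero polynomial of degree at most $n$ has at most $n$ roots. First I would normalise the line: since $\ell$ is a nonzero affine polynomial, an invertible affine change of variables carries $\ell$ to a coordinate line, say the line $y=0$. Such a substitution maps $\Pi_n$ onto itself and sends lines to lines, so it preserves both the hypothesis and the desired conclusion; hence I may assume $\ell(x,y)=y$ and that the $n+1$ given nodes are $(t_1,0),\dots,(t_{n+1},0)$ with the $t_i$ pairwise distinct (distinctness is inherited from the distinctness of the nodes along the line).

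Next I would restrict $p$ to the line. The univariate polynomial $g(x):=p(x,0)$ lies in the space of one-variable polynomials of degree at most $n$ and vanishes at the $n+1$ distinct points $t_1,\dots,t_{n+1}$. By the factor theorem applied $n+1$ times, a univariate polynomial of degree at most $n$ with $n+1$ distinct zeros must be identically zero, so $g\equiv 0$; that is, $p(x,0)=0$ for every $x$.

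Finally I would upgrade ``$p$ vanishes on $y=0$'' to ``$y$ divides $p$.'' Writing $p$ as a polynomial in $y$ with coefficients in $x$, $p(x,y)=\sum_{j\ge 0} y^j c_j(x)$, the condition $p(x,0)=c_0(x)\equiv 0$ forces $c_0=0$, so $p=y\,r$ with $r:=\sum_{j\ge 1} y^{j-1} c_j(x)\in\Pi_{n-1}$, the degree bound on $r$ following from $\deg p\le n$. Undoing the affine change of variables returns the statement in the form $p=\ell r$ with $r\in\Pi_{n-1}$. The only point that needs a little care is this last divisibility step together with the accompanying degree bookkeeping; equivalently one can avoid the explicit expansion by dividing $p$ by $\ell$ (a degree-one polynomial with constant leading coefficient in the chosen variable) and checking that the remainder, being independent of that variable, vanishes on the line and hence is zero. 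The affine-invariance reduction itself, though it does the real work of simplifying the geometry, is entirely routine.
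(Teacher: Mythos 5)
Your proof is correct and complete: the affine normalisation to $\ell = \{y=0\}$, the reduction to the univariate fact that a degree-$\le n$ polynomial with $n+1$ distinct roots vanishes identically, and the divisibility step via the expansion $p(x,y)=\sum_j y^j c_j(x)$ are all sound, with the degree bound on $r$ handled properly. Note that the paper itself gives no proof of this proposition --- it is cited as well known from the literature (Hakopian, Jetter, Zimmermann, Proposition 1.3 of their 2009 paper) --- so your argument, which is the standard one underlying that citation, supplies exactly what the paper leaves implicit.
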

From here we readily get that at most $n+1$ nodes of an $n$-poised
set $\Xset_N$ can be collinear. In view
of this an $(n+1)$-node line $\ell$ is called a maximal line \cite{dB07}. \\

Next, let us bring the Cayley-Bacharach theorem (see e.g. \cite{E96}, Th. CB4;    
 \cite{HJZ09a}, Prop.~4.1).

\begin{theorem}\label{thm:C-B}
Assume that two algebraic curves of degree $m$ and $n$, respectively,
intersect at $m n$ distinct points. Then the set $\Xset$ of these
intersection points is essentially $(m{+}n{-}3)$-dependent.
\end{theorem}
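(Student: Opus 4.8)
The plan is to unwind the definition of essential dependence into a single vanishing statement, and then prove that statement by restricting to one of the two curves and invoking the Riemann--Roch theorem. Fix $f\in\Pi_m$ and $g\in\Pi_n$ having the two given curves as their zero sets, so $\Xset=\{f=0\}\cap\{g=0\}$ has exactly $mn$ points. Since $mn$ is the B\'ezout bound, $f$ and $g$ share no common component and meet \emph{transversally}: each point of $\Xset$ is a simple point of both curves. Put $V=\{p\in\Pi_{m+n-3}:p|_{\Xset}=0\}$ and, for a node $A$, $V_A=\{p\in\Pi_{m+n-3}:p|_{\Xset\setminus\{A\}}=0\}$. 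A short check shows that $A$ admits an $(m+n-3)$-fundamental polynomial exactly when $V_A\supsetneq V$. Hence the theorem is equivalent to the \emph{Cayley--Bacharach vanishing property}: for every $A\in\Xset$, each $p\in\Pi_{m+n-3}$ vanishing on $\Xset\setminus\{A\}$ also vanishes at $A$, i.e.\ $V_A=V$.

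To establish this I would restrict attention to the curve $f$, taking it first to be smooth and irreducible (the general case is addressed below). Let $L$ be its line section and $g_f=\binom{m-1}{2}$ its genus. Three standard facts line up on $f$: the canonical class is $K_f\sim(m-3)L$; the curve $g$ cuts out the divisor $\Xset\sim nL$; and any form of degree $m+n-3$ cuts out a divisor linearly equivalent to $(m+n-3)L\sim K_f+\Xset$. Now take $p\in\Pi_{m+n-3}$ vanishing on $\Xset\setminus\{A\}$. If $f\mid p$ then $p$ vanishes on all of $f\supseteq\Xset$ and there is nothing to prove, so assume $f\nmid p$; then $D:=\mathrm{div}(p)|_f$ is effective with $D\ge\Xset-A$ and $D\sim K_f+\Xset$, so the residual $E:=D-(\Xset-A)$ is effective and $E\sim K_f+A$. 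By Riemann--Roch $h^0(K_f+A)=h^0(K_f)=g_f$, so the natural inclusion $H^0(K_f)\hookrightarrow H^0(K_f+A)$ is onto; equivalently $A$ is a base point of $|K_f+A|$ and every effective divisor in that class contains $A$. Therefore $E\ge A$, whence $D\ge\Xset$ and $p(A)=0$, which is precisely the vanishing property.

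The main obstacle is the passage to curves that are \emph{singular or reducible}, where the smooth-curve computation above does not literally apply. Transversality removes all multiplicity issues \emph{at} the nodes, but $f$ may be singular away from $\Xset$ or may split into components. I would resolve this either by working on the normalization of $f$ together with the theory of adjoint curves, so that the canonical series is still cut by degree-$(m-3)$ adjoints and the Riemann--Roch count is unchanged, or, more robustly, by replacing the curve argument with its algebraic form: the Artinian quotient of the polynomial ring by $(f,g)$ is Gorenstein with one-dimensional socle in degree $m+n-3$ (via the Koszul resolution of the complete intersection), and the induced perfect pairing says exactly that each evaluation functional on $\Pi_{m+n-3}$ lies in the span of the others. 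As a check that $m+n-3$ is the right degree, I would also record the dimension count from Noether's $AF+BG$ theorem: $V=\{af+bg:\deg a\le n-3,\ \deg b\le m-3\}$ has dimension $\binom{m-1}{2}+\binom{n-1}{2}$, so $\Xset$ imposes exactly $mn-1$ conditions on $\Pi_{m+n-3}$; this already forces $(m+n-3)$-dependence, with the \emph{essential} dependence supplied by the vanishing property above.
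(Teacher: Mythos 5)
The paper itself offers no proof of Theorem~\ref{thm:C-B}: it is imported as a known black box, with pointers to \cite{E96} (Th.~CB4) and \cite{HJZ09a} (Prop.~4.1). So there is no internal argument to compare against; what you have done is reconstruct a proof of the cited result, and most of it is sound. Your reduction of essential $(m{+}n{-}3)$-dependence to the vanishing statement $V_A=V$ for every $A\in\Xset$ is correct, as is the transversality observation (with $mn$ distinct intersection points, B\'ezout forces each point to be a simple point of both curves, with no further intersections at infinity). The smooth-irreducible case is the classical Riemann--Roch argument --- $K_f\sim(m-3)L$, the residual divisor $E\sim K_f+A$, and $h^0(K_f+A)=h^0(K_f)$ forcing $A$ to be a base point of $|K_f+A|$ --- and it is correct. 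The Noether $AF{+}BG$ dimension count (exactly $mn-1$ conditions imposed on $\Pi_{m+n-3}$) is also right, and you correctly note it yields only plain, not essential, dependence.

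The substantive gap is that the case you defer to a closing remark is the only case this paper ever uses: in Lemma~\ref{lem99} the theorem is applied with the quintic being a union of five lines and the cubic a union of three lines. For such curves your first fallback (normalization plus adjoint curves) is genuinely awkward --- the normalization of a union of lines is a disjoint union of rational curves, and the adjoint bookkeeping at the points where components meet is precisely the difficulty being waved away --- so the Koszul/Gorenstein route must carry the whole proof. That route does work for any two curves with no common component, but as written it needs repair on two points. First, the Gorenstein ring has to be the homogeneous coordinate ring $S/(F,G)$ of the complete intersection in the projective plane, not the affine quotient $k[x,y]/(f,g)$: by your own transversality remark the latter is a product of $mn$ copies of the ground field, whose socle is everything and detects nothing. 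Second, the socle degree of the Artinian reduction of $S/(F,G)$ is $m+n-2$ (equivalently, the $a$-invariant is $m+n-3$); your ``one-dimensional socle in degree $m+n-3$'' is off by one, although $m+n-3$ is indeed the critical degree in which the duality produces the vanishing property. With those corrections your outline coincides with the standard proof in the Eisenbud--Green--Harris source that the paper cites; without them, the proof covers only the smooth irreducible case, which never occurs in this paper's applications.
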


We are going to consider a special type of $n$-poised sets defined by Chung and Yao:
\begin{definition}[\cite{CY77}]
An n-poised set $\Xset$ is called $GC_n$ set, if the $n$-fundamental polynomial of each node $A\in\Xset$ is a product of
$n$ linear factors.
\end{definition}

Now we are in a position to present the Gasca-Maeztu conjecture.
\begin{conjecture}[\cite{GM82}]
For any  $GC_n$ set $\Xset$ there is a maximal line, i.e., a line
passing through its $n+1$ nodes.
\end{conjecture}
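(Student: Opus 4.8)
The plan is to prove the statement by induction on $n$, isolating the existence of a maximal line as the sole nontrivial content. The base cases $n\le 1$ are immediate, since a $1$-poised set consists of three non-collinear nodes and any line through two of them is already maximal. For the inductive step I would first record the reduction that makes the induction run: if a $GC_n$ set $\Xset$ possesses a maximal line $\ell$, then the $\binom{n+1}{2}$ nodes of $\Xset$ lying off $\ell$ form a $GC_{n-1}$ set. Indeed, for any node $A\notin\ell$ the restriction of $p_A^\star$ to $\ell$ is a univariate polynomial of degree at most $n$ vanishing at all $n+1$ nodes of $\ell$, hence identically zero; by Proposition~\ref{pointsell} this forces $p_A^\star=\ell\,q_A$ with $q_A\in\Pi_{n-1}$, and one checks that $q_A$ is the $(n-1)$-fundamental polynomial of $A$ for $\Xset\setminus\ell$, a product of $n-1$ lines. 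Thus, granting the existence of a single maximal line, the whole conjecture follows from the inductive hypothesis.

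It therefore remains to produce a maximal line in an arbitrary $GC_n$ set, and this is where the real work lies. My approach would be the \emph{used lines} counting of Carnicer and Gasca. Each of the $N=\binom{n+2}{2}$ nodes uses exactly $n$ lines, so there are $nN$ incidences of the form (node, used line). By Lemma~\ref{lm} every used line is at least a $2$-node line; more precisely I would establish the sharper fact that the number of nodes using a given $k$-node line is nondecreasing in $k$, reaching the value $\binom{n+1}{2}$ precisely when the line is maximal, by the argument above. Fixing a single node $A$ and examining $p_A^\star=\ell_1\cdots\ell_n$, whose zero set covers the $N-1$ remaining nodes, pigeonhole already yields a line carrying at least $(n+3)/2$ nodes. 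The strategy is then to feed such heavily populated lines into the Cayley--Bacharach theorem (Theorem~\ref{thm:C-B}): intersecting the curve $p_A^\star$ of degree $n$ with $p_B^\star$ for a second node $B$, or with a low-degree curve through a cluster of nodes, produces essentially dependent subsets whose existence contradicts the $n$-poisedness guaranteed by Proposition~\ref{poisedii}, unless some line accumulates the full $n+1$ nodes.

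The main obstacle, and the reason this plan does not close in full generality, is ruling out the \emph{balanced} configurations in which every used line is short and the incidence count $nN$ is spread so evenly that no Cayley--Bacharach contradiction is forced. The pigeonhole bound $(n+3)/2$ already falls short of $n+1$ for $n\ge 3$, so one must trace how the used lines of distinct nodes intersect and repeatedly apply Theorem~\ref{thm:C-B} to the resulting grids of intersection points; the number of geometric cases to be eliminated grows rapidly with $n$. Controlling this case analysis by hand is exactly what has so far been accomplished only for $n\le 5$, and the present paper's contribution is a shorter treatment of $n=5$, whereas a uniform argument taming the combinatorial explosion for all $n$ remains the crux that keeps the conjecture open. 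I would therefore expect the honest outcome of this plan to be a proof for small $n$ together with a precise reduction of the general case to the classification of these balanced configurations as the sole remaining hurdle.
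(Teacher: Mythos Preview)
The statement you are addressing is labeled a \emph{Conjecture} in the paper and is not proved there in general; the paper's contribution is a proof of the special case $n=5$ (Theorem~\ref{thmain2}). So there is no ``paper's own proof'' of this statement to compare against, and your proposal is, by your own admission in its final paragraph, not a proof either but an outline of an approach that does not close for general~$n$.

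That said, there is a genuine logical slip in your inductive framework. You record the (correct) reduction that deleting a maximal line from a $GC_n$ set leaves a $GC_{n-1}$ set, and then write ``granting the existence of a single maximal line, the whole conjecture follows from the inductive hypothesis.'' But the conjecture for a fixed $n$ \emph{is} precisely the existence of a single maximal line; once you have granted that, there is nothing left to prove for that $n$, and the inductive hypothesis is never invoked. The reduction you state runs the wrong way for an induction: it shows that a maximal line in a $GC_n$ set gives access to a $GC_{n-1}$ subset, not that information about $GC_{n-1}$ sets helps produce a maximal line in a $GC_n$ set. Your induction therefore contributes nothing toward the actual goal, as you yourself implicitly concede in the next paragraph (``this is where the real work lies'').

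The remainder of your proposal --- pigeonhole on the used lines, bounds on how often a $k$-node line can be used, and Cayley--Bacharach to rule out balanced configurations --- is an accurate high-level description of the \emph{shape} of the known arguments for small $n$, including the one in this paper for $n=5$. But it is a description, not a plan with a path to completion: the pigeonhole bound $(n+3)/2$ is far from $n+1$, and you offer no mechanism beyond ``repeatedly apply Theorem~\ref{thm:C-B}'' for closing the gap. Your concluding paragraph is the honest one: what you have written is a reduction of the open conjecture to exactly the combinatorial case analysis that keeps it open.
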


Since now the Gasca-Maeztu conjecture was proved to be true only for $n \leq 5$. The case $n=2$ is trivial, and the case $n=3$ is easy to verify. The case
$n = 4$ first was proved by J. R.  Bush \cite{B90}. Several other proofs have been published since then (see e.g. \cite{CG01}, \cite{HJZ09b}, \cite{BHT}). For the case $n=5$ there is only one proof by H. Hakopian, K. Jetter and G. Zimmermann \cite{HJZ14}.

\subsection{The m-distribution sequence of a node}

In this section
we bring a number of concepts, properties and results from  \cite{HJZ14}.

Suppose that $\mathcal X$ is a $GC_n$ set. Consider a node $A\in\Xset$ together with the set of $n$ used lines $\Lset_A.$ 
The $N-1$ nodes of $\Xset\setminus\{A\}$ are somehow
distributed in the lines of $\Lset_A.$ 

Let us order the lines of $\Lset_A$ in the following way:

The line $\ell_1$ is a line in $\Lset_A$ that passes through maximal number of nodes of $\Xset,$ denoted by $k_1:$  $\Xset\cap\ell_1=k_1.$ 

The line $\ell_2$ is a line in $\Lset_A\setminus \{\ell_1\}$ that passes through maximal number of nodes of $\Xset\setminus\ell_1,$ denoted by $k_2:$  $(\Xset\setminus\ell_1)\cap\ell_2=k_2.$ 

In the general case the line $\ell_s,\ s=1,\ldots,n,$ is a line in $\Lset_A\setminus \{\ell_1,\ldots,\ell_{s-1}\}$ that passes through maximal number of nodes of the set $\Xset\setminus\cup_{i=1}^{s-1}\ell_i,$ denoted by $k_s:$  $(\Xset\setminus\cup_{i=1}^{s-1}\ell_i)\cap\ell_s=k_s.$

A correspondingly ordered  line sequence $$\Sset=(\ell_1,\ldots,\ell_n)$$ 
is called  a \emph{maximal line sequence} or briefly an \emph{m-line sequence}.
The sequence $(k_1,\ldots,k_n)$ is called a \emph{maximal distribution sequence}. Briefly
we call it \emph{m-distribution sequence} or \emph{m-d sequence.}

Evidently, for the m-d sequence we have that
\begin{equation}\label{nor}k_1 \geq k_2 \geq \cdots \geq k_n\ \hbox{and}\  k_1 + \cdots + k_n = N-1.\end{equation}

As it is shown in \cite{HJZ14} the m-distribution sequence for a node $A$ is unique, while it may correspond to
several m-line sequences.

Note that, an intersection point of several lines of $\Lset_A$
is counted for the line containing it which appears in $\Sset$ first.
Each node in $\Xset$ is called a \emph{primary} node for the line it
is counted for, and a \emph{secondary} node for the other lines
containing it.

According to Lemma~\ref{lm},
every used line has to contain at least two primary nodes, i.e.,
\begin{equation}\label{eq:kgeq2}
  k_i \geq 2 \quad\text{for } i=1,\ldots,n \,.
\end{equation}

\noindent Let  $\Sset=(\ell_1,\ldots,\ell_n)$ be an m-line sequence with the
associated m-d sequence $(k_1,\ldots,k_n)$\,.
\begin{lemma}[\cite{HJZ14}, Lemma 2.5]\label{lem:dist1}
Assume that $k_i=k_{i+1}=:k$ for some $i$. If the intersection point of
lines $\ell_i$ and $\ell_{i+1}$ belongs to $\Xset$, then it is a secondary
node for both $\ell_i$ and $\ell_{i+1}$. Moreover, interchanging
$\ell_i$ and $\ell_{i+1}$ in $\Sset$ still yields an m-line sequence.
\end{lemma}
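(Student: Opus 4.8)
The plan is to exploit the greedy maximality built into the construction of the m-line sequence. Write $U:=\bigcup_{j<i}\ell_j$ for the union of the lines preceding position $i$, and let $P:=\ell_i\cap\ell_{i+1}$, a single point since $\ell_i\neq\ell_{i+1}$. By definition the $k$ primary nodes of $\ell_i$ are those in $(\ell_i\setminus U)\cap\Xset$, and the $k$ primary nodes of $\ell_{i+1}$ are those in $(\ell_{i+1}\setminus(U\cup\ell_i))\cap\Xset$. One half of the first assertion is immediate: since $P\in\ell_i$ and $\ell_i$ precedes $\ell_{i+1}$ in $\Sset$, the point $P$ is counted for $\ell_i$ or for an even earlier line, so whenever $P\in\Xset$ it is automatically secondary for $\ell_{i+1}$.

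The crux is to show that $P$ is secondary for $\ell_i$ too, equivalently that $P\in U$. Suppose instead $P\in\Xset\setminus U$. As $\ell_i\cap\ell_{i+1}=\{P\}$, the nodes of $\ell_{i+1}$ off $U$ are precisely its $k$ primary nodes together with $P$, whence $\big|(\ell_{i+1}\setminus U)\cap\Xset\big|=k+1$. But $\ell_{i+1}$ is among the candidate lines at step $i$, where $\ell_i$ was selected to meet the maximal number $k_i=k$ of nodes of $\Xset\setminus U$; this forces $k+1\le k$, a contradiction. Hence $P\in U$, and $P$ is secondary for $\ell_i$ as well.

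It remains to confirm that $\Sset'=(\ell_1,\dots,\ell_{i-1},\ell_{i+1},\ell_i,\ell_{i+2},\dots,\ell_n)$ is again an m-line sequence. The positions before $i$ are untouched, and since the union $U\cup\ell_i\cup\ell_{i+1}$ is unaffected by the swap, the positions after $i+1$ proceed verbatim; so only positions $i$ and $i+1$ require checking. At position $i$, using $P\in U$ gives $\big|(\ell_{i+1}\setminus U)\cap\Xset\big|=k$, matching the maximum $k$ realized by $\ell_i$, so $\ell_{i+1}$ is a legitimate choice. At position $i+1$ the remaining nodes are $\Xset\setminus(U\cup\ell_{i+1})$; here $\ell_i$ still contributes $k$ nodes, because $P\in U$ is not among its primary nodes and so deleting $\ell_{i+1}$ removes none of them, while every later candidate $\ell_m$ obeys $\big|(\ell_m\setminus(U\cup\ell_{i+1}))\cap\Xset\big|\le\big|(\ell_m\setminus U)\cap\Xset\big|\le k$, the last bound coming from $\ell_m$ being a candidate already at step $i$. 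Thus $\ell_i$ is a legitimate choice at position $i+1$, and $\Sset'$ is an m-line sequence. I expect the position-$(i+1)$ verification to be the main obstacle, since there the deleted set changes from $U\cup\ell_i$ to $U\cup\ell_{i+1}$; the way around it is to bound each later candidate by its step-$i$ count instead of comparing the two deleted sets.
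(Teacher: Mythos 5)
The paper itself does not prove this lemma---it is imported verbatim from \cite{HJZ14} (Lemma 2.5)---so there is no in-paper proof to compare against; your proposal must stand on its own, and it does. Your argument is correct and is essentially the standard greedy-maximality one: if the intersection point lay outside $\bigcup_{j<i}\ell_j$, then $\ell_{i+1}$ would meet $k+1$ nodes of $\Xset\setminus\bigcup_{j<i}\ell_j$, contradicting the choice of $\ell_i$ at step $i$; the same counting, plus bounding each later candidate by its step-$i$ count, legitimizes the swap. The only cosmetic omission is the degenerate situation where $\ell_i\cap\ell_{i+1}$ is empty (parallel lines) or is not a node of $\Xset$: there your phrase ``using $P\in U$'' does not literally apply, but the swap claim holds trivially since the two lines then share no node at all outside $\bigcup_{j<i}\ell_j$, so both counts are unaffected.
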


We say that a
polynomial has $(s_i,\ldots,s_j)$ \emph{primary zeroes} in the lines
$(\ell_i,\ldots,\ell_j)$ if the zeroes are primary nodes in the
respective lines.
\smallskip
From Proposition \ref{pointsell} we get 
\begin{corollary}\label{cor22} If a 
polynomial $p\in\Pi_{m-1}$ has $(m,m-1,\ldots,m-k)$ primary zeroes in the lines
$(\ell_{m-k},\ell_{m-k+1}\ldots,\ell_{m})$ then we have that 
$
p = \ell_{m}\ell_{m-1}\cdots\ell_{m-k}  r  ,\ \text{where} \ r\in\Pi_{m-k}.
$
\end{corollary}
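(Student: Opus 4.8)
The plan is to peel off the lines one at a time, from $\ell_{m-k}$ up to $\ell_{m}$, invoking Proposition~\ref{pointsell} at each stage. The structural fact I will exploit is that a \emph{primary} node of a line $\ell_{m-k+j}$ lies on no earlier line of the sequence: by the counting convention, a node sitting on several lines of $\Lset_A$ is counted as primary only for the first line (in the order of $\Sset$) that contains it. Hence the $m-j$ primary zeroes of $p$ on $\ell_{m-k+j}$ avoid all of $\ell_{m-k},\ldots,\ell_{m-k+j-1}$, and this is exactly what lets each successive division go through.

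First I would set $p_0:=p\in\Pi_{m-1}$ and note that $p_0$ vanishes at the $m$ primary zeroes lying on $\ell_{m-k}$. Since $m=(m-1)+1=\deg p_0+1$, Proposition~\ref{pointsell} yields $p_0=\ell_{m-k}\,p_1$ with $p_1\in\Pi_{m-2}$. I then proceed inductively: having written $p=\ell_{m-k}\cdots\ell_{m-k+j-1}\,p_j$ with $p_j\in\Pi_{m-1-j}$, I look at the $m-j$ primary zeroes on $\ell_{m-k+j}$. By the observation above these points do not lie on any of the already-extracted lines, so the product $\ell_{m-k}\cdots\ell_{m-k+j-1}$ is nonzero there; as $p$ vanishes at each of them, $p_j$ must vanish there as well. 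Since $m-j=\deg p_j+1$, a further application of Proposition~\ref{pointsell} gives $p_j=\ell_{m-k+j}\,p_{j+1}$ with $p_{j+1}\in\Pi_{m-2-j}$.

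Carrying this out for $j=0,1,\ldots,k$ extracts all $k+1$ factors and leaves $p=\ell_{m}\ell_{m-1}\cdots\ell_{m-k}\,r$ with $r:=p_{k+1}\in\Pi_{m-2-k}\subseteq\Pi_{m-k}$, which is the asserted conclusion. (In fact the degree bound comes out sharper than stated, namely $\deg r\le m-k-2$, as one sees directly from $\deg p\le m-1$ and the removal of $k+1$ linear factors.)

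I expect the only delicate point to be the bookkeeping that justifies each division, i.e.\ confirming at every step that the primary zeroes of the current line genuinely survive as zeroes of the running quotient $p_j$. This rests entirely on the primary/secondary distinction and requires no further geometric input, so once that observation is stated cleanly the remainder is a routine iteration of Proposition~\ref{pointsell}.
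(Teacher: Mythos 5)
Your proof is correct and is precisely the argument the paper intends: the paper offers no explicit proof beyond the words ``From Proposition~\ref{pointsell} we get,'' and your iteration --- peeling off $\ell_{m-k},\ell_{m-k+1},\ldots,\ell_m$ in that order, using the fact that primary nodes of a line avoid all lines appearing earlier in the sequence to justify each successive division --- is exactly how that proposition yields the corollary. Your side remark that the quotient actually lies in $\Pi_{m-k-2}$, sharper than the stated $\Pi_{m-k}$, is also accurate; the corollary's bound is simply not tight.
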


In some cases we shall fix a particular line $\wti\ell$ used by a node and then study
the properties of the other factors of the fundamental polynomial. In particular, this
will be the case for a line $\wti\ell$ that is shared by several nodes.

In this case in the corresponding m-line sequence, called $\wti\ell$-m-line sequence, we take as the first line $\ell_1$ the line $\wti\ell,$ no matter through how many nodes it passes. Then the second and subsequent lines are chosen, as in the case of the  m-line sequence. 

Thus the line $\ell_2$ is a line in $\Lset_A\setminus \{\wti\ell_1\}$ that passes through maximal number of nodes of $\Xset\setminus\wti\ell_1,$ and so on.

Correspondingly we define   $\wti\ell$-m-distribution  sequence.

\section{The Gasca-Maeztu conjecture for $n=5$}

Let us formulate the Gasca-Maeztu conjecture for
$n=5$ as:

\begin{theorem}\label{thmain2}
For any $GC_5$ set $\Xset$ of $21$ nodes there is a maximal line, i.e.,
a $6$-node line.
\end{theorem}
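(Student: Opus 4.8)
The plan is to argue by contradiction: assume that $\Xset$ has no maximal line, so that every line meets $\Xset$ in at most $5$ nodes. This assumption is equivalent to the statement that every node $A\in\Xset$ has an m-d sequence with $k_1\le 5$. Indeed, if $\ell$ were a $6$-node line and $A\notin\ell$, then $p_A^\star$ is a quintic vanishing at the $6$ collinear nodes of $\ell$ (all $\neq A$), hence divisible by $\ell$ by Proposition~\ref{pointsell}, so $A$ uses $\ell$ and $\ell$ contributes $k_1=6$; conversely $k_1=6$ means $\ell_1$ is a $6$-node line. Recalling from \eqref{nor} that $k_1\ge\cdots\ge k_5\ge 2$ and $k_1+\cdots+k_5=N-1=20$, and noting the average of the five terms is $4$ so that $k_1\ge 4$, a short count shows that the m-d sequence of every node is one of the five patterns
\begin{equation*}
(5,5,5,3,2),\quad(5,5,4,4,2),\quad(5,5,4,3,3),\quad(5,4,4,4,3),\quad(4,4,4,4,4).
\end{equation*}
The goal is then to rule out each pattern.

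Second, I would set up the main tool: an application of the Cayley--Bacharach theorem (Theorem~\ref{thm:C-B}) in the form $m=3,\ n=5$. If I can exhibit a cubic $C$ and a quintic $Q$ sharing no common line and meeting in exactly $15$ distinct points, all of which are nodes of $\Xset$, then these $15$ nodes are essentially $5$-dependent; since every subset of the $5$-poised set $\Xset$ is $5$-independent (each node inherits a fundamental polynomial by restriction), this is the desired contradiction. The natural candidates are, for $Q$, the fundamental polynomial $p_A^\star=\ell_1\cdots\ell_5$ of a node $A$, a quintic vanishing at the $20$ nodes $\neq A$; and for $C$, a product of three used lines carrying $15$ nodes none equal to $A$, available precisely when three lines of some m-line sequence are node-disjoint and each carries $5$ nodes. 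Corollary~\ref{cor22} and Lemma~\ref{lem:dist1} would be used to normalize the relevant m-line sequences and to guarantee the factorizations that make the node-disjointness and the exact count of $15$ intersection points hold.

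Third, I would treat the patterns in decreasing order of how many $5$-node lines they contain. The cases $(5,5,5,3,2)$, $(5,5,4,4,2)$ and $(5,5,4,3,3)$ each carry at least two $5$-node lines; here, after using Lemma~\ref{lem:dist1} to rearrange equal blocks so that the $5$-node lines can be chosen node-disjoint, the cubic/quintic construction (or the quartic/quartic variant $m=n=4$, giving $16$ intersection points and again essential $5$-dependence) applies most directly. The remaining, most balanced patterns $(5,4,4,4,3)$ and $(4,4,4,4,4)$ are where the real work lies: there one cannot immediately isolate three node-disjoint $5$-node lines, so instead I would fix a line $\wti\ell$ used by as many nodes as possible, pass to the $\wti\ell$-m-line sequences of those nodes, and compare the forced distributions of the $16$ (resp.\ $15$) nodes off $\wti\ell$; accumulating these constraints should again produce a low-degree curve meeting a fundamental polynomial in too many nodes.

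The step I expect to be the main obstacle is the balanced case $(4,4,4,4,4)$ together with the bookkeeping that makes Cayley--Bacharach applicable. Two difficulties compound. First, when several used lines pass through common nodes, the naive counts overcount intersection points, so one must track primary versus secondary nodes carefully (via the m-d sequence and Lemma~\ref{lem:dist1}) to certify that the two curves really meet in the prescribed number of \emph{distinct} nodes and share no common line. Second, in the fully balanced pattern no single line dominates, so the contradiction cannot come from one node's sequence alone and must be assembled from the interaction of several nodes sharing a line $\wti\ell$. Making this global counting precise, rather than the individual case eliminations, is where the argument will be most delicate.
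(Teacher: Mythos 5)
Your opening matches the paper exactly (assume no maximal line, then \eqref{nor} and \eqref{eq:kgeq2} force the five m-d sequences \eqref{eq:5cases}), and Cayley--Bacharach is indeed used in the paper --- but your central construction is broken. You propose to take the quintic $Q=p_A^\star=\ell_1\ell_2\ell_3\ell_4\ell_5$ and the cubic $C$ to be ``a product of three used lines'' of ``some m-line sequence'', i.e.\ $C=\ell_1\ell_2\ell_3$. These two curves share the components $\ell_1,\ell_2,\ell_3$, so they do not meet in $15$ distinct points and Theorem \ref{thm:C-B} cannot be invoked; you state the ``no common line'' requirement yourself, and your candidate pair violates it. The escape routes also fail in the very case $(5,5,5,3,2)$ which you file under ``applies most directly'': any node $B$ off $\ell_1\cup\ell_2\cup\ell_3$ is forced to use all three lines (its fundamental polynomial has $(5,5,5)$ primary zeros there, so Corollary \ref{cor22} applies --- this is Lemma \ref{lem:AL}(i) of the paper), hence $p_B^\star$ always shares these lines with $C$; while if $A$ lies on, say, $\ell_1$, then $Q=p_A^\star$ does \emph{not} vanish at $A$, so $C\cap Q$ contains only $14$ nodes and the fifteenth intersection point need not be a node of $\Xset$ at all --- Cayley--Bacharach then gives essential $5$-dependence of a $15$-point set not contained in $\Xset$, which contradicts nothing, since the nodes' fundamental polynomials need not vanish at the extra non-node point.

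The paper's actual single use of Cayley--Bacharach (Lemma \ref{lem99}) avoids this by taking the quintic to be a product of five \emph{concurrent} lines through a node $B\in\Bset$, each containing three nodes of $\Aset$: a line through $B$ meets each $\ell_i$ in exactly one point, so all $5\cdot3=15$ intersection points with $\ell_1\ell_2\ell_3$ are distinct and are precisely the nodes of $\Aset$, no component is shared, and $5$-dependence of $\Aset\subset\Xset$ gives the contradiction. Even so, this only yields the auxiliary bound $m_3(B)\le4$; the engine of the proof is the line-usage bounds imported from \cite{HJZ14} (Propositions \ref{prp:ub1}--\ref{prp:ub5}, summarized in the table \eqref{eq:table}), which your plan never engages. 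After $(5,5,5,3,2)$ is excluded (Proposition \ref{thm:main3}, the bulk of the paper, done by counting the lines through a $2_m$-node of $\Bset$), those bounds give Lemma \ref{lem20}: no $3$-node line exists, every $m$-node line is used exactly $m-1$ times, and no two lines used by one node meet at a node. This kills $(5,5,4,3,3)$ and $(5,4,4,4,3)$ immediately, $(5,5,4,4,2)$ with a short argument, and $(4,4,4,4,4)$ by pure counting via \eqref{eq25}. Note that your difficulty assessment is inverted: $(4,4,4,4,4)$, which you expect to be the hard core requiring delicate global bookkeeping, is the easiest case in the paper (a few lines), whereas $(5,5,5,3,2)$ is where all the real work lies, and your proposal contains no mechanism for it.
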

To prove the theorem assume by way of contradiction the following.

\begin{assumption}\label{B} The set $\Xset$ is a $GC_5$ set with no maximal line.
\end{assumption}
In view of  \eqref{nor} and
\eqref{eq:kgeq2} the only
possible m-d sequences for any node $A\in\Xset$ are
\begin{equation}\label{eq:5cases}(5,5,5,3,2);\quad (5,5,4,4,2);\quad (5,5,4,3,3);\quad (5,4,4,4,3);\quad (4,4,4,4,4).
\end{equation}

The results from \cite{HJZ14} below show how many times a line can be used, depending the number of nodes it passes through. In each statement it is assumed that $\Xset$ is a $GC_5$ set with no maximal line.
\begin{proposition}[\cite{HJZ14}, Prop.~2.11]\label{prp:ub1}
Suppose
that $\wti\ell$ is a $2$-node line.
Then $\wti\ell$ can be used by at most one node of $\Xset$.
\end{proposition}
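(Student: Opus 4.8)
The plan is to argue by contradiction against the upper bound: suppose the $2$-node line $\wti\ell$ is used by two distinct nodes $A, B \in \Xset$. Since $\wti\ell$ passes through exactly two nodes of $\Xset$, call them $P$ and $Q$, I first want to understand how $A$ and $B$ relate to these points. By Lemma~\ref{lm}, when $A$ uses $\wti\ell$ the line must pass through at least two nodes at which the cofactor $q_A = p_A^\star/\wti\ell$ does not vanish; since $\wti\ell$ contains only $P$ and $Q$, this forces $q_A(P) \ne 0$ and $q_A(Q) \ne 0$, and likewise $q_B(P), q_B(Q) \ne 0$. In particular neither $A$ nor $B$ can equal $P$ or $Q$, because the fundamental polynomial $p_A^\star$ vanishes at every node other than $A$, so if say $A = P$ then $p_A^\star(Q) = 0$ while $\wti\ell(Q) = 0$ too, and the structure is inconsistent with $q_A(Q) \ne 0$. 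So $A, B, P, Q$ are four distinct nodes with $A, B \notin \wti\ell$.

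Next I would pass to the $\wti\ell$-m-line sequences for $A$ and for $B$, taking $\ell_1 = \wti\ell$ in each. For node $A$ this gives an ordering $(\wti\ell, \ell_2^A, \ldots, \ell_5^A)$ of the five used lines with $\wti\ell$ contributing exactly $k_1 = 2$ primary nodes, namely $P$ and $Q$. The remaining four lines $\ell_2^A, \ldots, \ell_5^A$ must then distribute the other $N - 1 - 2 = 18$ nodes of $\Xset \setminus \{A, P, Q\}$, with each still carrying at least two primary nodes by \eqref{eq:kgeq2}, and none of them a maximal ($6$-node) line by Assumption~\ref{B}. The same holds for $B$. The heart of the argument is that $P$ and $Q$ are now, in a sense, ``spent'' on the line $\wti\ell$ for both nodes, and I expect to derive a contradiction by counting how the eighteen-plus remaining nodes can be covered by four non-maximal lines through $A$ (and separately through $B$) while remaining consistent with the poisedness of $\Xset$.

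The key mechanism I anticipate using is the Cayley–Bacharach theorem (Theorem~\ref{thm:C-B}) together with Corollary~\ref{cor22}. Concretely, the product $\wti\ell \, \ell_2^A \cdots \ell_5^A = p_A^\star$ is a degree-$5$ curve, and $p_B^\star$ is another; considering the intersection of suitable factor products of $p_A^\star$ and $p_B^\star$ should produce a configuration of collinear or co-conic nodes forcing an essentially dependent subset. More precisely, I would try to show that the assumption produces a set of intersection points forming the Cayley–Bacharach configuration of two curves whose degrees sum appropriately, so that a polynomial vanishing at all but one of them is forced to vanish at the last — contradicting that each node in the poised set $\Xset$ has a genuine fundamental polynomial. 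The hard part will be bookkeeping the primary/secondary status of the shared nodes $P, Q$ and the nodes common to lines used by both $A$ and $B$: one must rule out the degenerate cases where some $\ell_i^A$ coincides with some $\ell_j^B$, and handle the interplay with Lemma~\ref{lem:dist1} when equal $k$-values permit reordering. I expect the contradiction to crystallize once the distribution of the eighteen residual nodes is pinned down tightly enough that no valid assignment into four sub-maximal lines survives for both $A$ and $B$ simultaneously.
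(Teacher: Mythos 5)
There is a genuine gap here, and it is worth noting first that the paper itself never proves this proposition --- it is imported verbatim from \cite{HJZ14} (Prop.~2.11) --- so your argument has to stand entirely on its own. It does not: it is a plan rather than a proof. Your first paragraph is fine (though the conclusion that a node using $\wti\ell$ cannot lie on $\wti\ell$ follows in one line: $p_A^\star(A)=1\neq 0$ while $\wti\ell$ is a factor of $p_A^\star$, so $A\notin\wti\ell$), but both places where the contradiction is supposed to emerge are left unexecuted, and neither works as stated. (1) The distribution count cannot by itself give a contradiction: the $18$ residual nodes fit into four non-maximal lines perfectly well, since the $\wti\ell$-m-d sequences $(2,5,5,5,3)$ and $(2,5,5,4,4)$ are arithmetically consistent with \eqref{nor}, \eqref{eq:kgeq2} and Assumption~\ref{B}, so ``pinning down the distribution'' eliminates nothing. (2) The Cayley--Bacharach step is never instantiated: Theorem~\ref{thm:C-B} needs two specific curves of degrees $m$ and $n$ meeting in exactly $mn$ distinct points, and you never say which curves, of which degrees, or why the intersection consists of exactly $mn$ distinct points. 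Phrases like ``I anticipate,'' ``I would try,'' and ``I expect the contradiction to crystallize'' mark precisely the part of the proof that is missing.

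A self-contained argument close to what you gesture at does exist, and it shows what your setup still lacks. Write $p_A^\star=\wti\ell\,q_A$ and $p_B^\star=\wti\ell\,q_B$, where $q_A,q_B\in\Pi_4$ are each products of four lines, every line passing through at most $5$ nodes by Assumption~\ref{B}. The quartics $q_A$ and $q_B$ vanish at all $17$ nodes of $\Xset\setminus\bigl(\wti\ell\cup\{A,B\}\bigr)$. If the four lines of $q_A$ were all distinct from the four lines of $q_B$, these common zeros would lie among the at most $4\cdot 4=16$ pairwise intersection points, a contradiction; so the two quartics share a line. Removing it ($\leq 5$ nodes) leaves $\geq 12>9$ common zeros of two products of three lines, forcing another shared line; then $\geq 7>4$ forces a third; then $\geq 2>1$ forces the last pair of lines to coincide. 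Hence $q_A=c\,q_B$ and so $p_A^\star=c\,p_B^\star$, which is absurd since $p_A^\star(A)=1$ while $p_B^\star(A)=0$. (Alternatively, one can adapt the linear-combination device the paper uses to prove Lemma~\ref{lem01}(i).) Either way, this line-by-line forcing is the decisive content of the proposition, and it is exactly what your proposal postpones.
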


\begin{proposition}[\cite{HJZ14}, Prop.~2.12]\label{prp:ub2}
Suppose
that $\wti\ell$ is a $3$-node line and is used by two nodes $A$, $B\in\Xset$. Then
there exists a
third node $C$ using $\wti\ell$\,. Furthermore, $A$, $B$, and $C$
share three other lines, each passing through five primary nodes. For
each of the three nodes, the m-d sequence is $(5,5,5,3,2)$,
and the other two nodes are the primary nodes in the respective fifth
line. In particular, $\wti\ell$ is used exactly three times.
\end{proposition}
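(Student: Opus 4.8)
The plan is to work with the two quartics obtained by peeling $\wti\ell$ off the fundamental polynomials. First note that $A,B\notin\wti\ell$, since $p_A^\star(A)=1$ forces $\wti\ell(A)\neq 0$; write $p_A^\star=\wti\ell\,q_A$ and $p_B^\star=\wti\ell\,q_B$ with $q_A,q_B\in\Pi_4$, each a product of four lines. Since $\wti\ell$ does not vanish off itself, both $q_A$ and $q_B$ vanish on the set $\Yset$ consisting of the $16$ nodes that lie off $\wti\ell$ and differ from $A$ and $B$. Everything will be read off from the common factors of $q_A$ and $q_B$.

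The core step is to show that $q_A$ and $q_B$ have exactly three common linear factors (which will turn out to be $5$-node lines). If they had no common component they would meet in at most $16$ points, hence precisely in the $16$ distinct nodes of $\Yset$; but then Theorem~\ref{thm:C-B} would make $\Yset$ essentially $5$-dependent, contradicting the fact that every subset of the $5$-poised set $\Xset$ is $5$-independent (a node's fundamental polynomial in $\Xset$ serves in any subset). Hence $q_A,q_B$ share a line $m_1$. Factoring $q_A=m_1\hat q_A$, $q_B=m_1\hat q_B$, the cubics $\hat q_A,\hat q_B$ still vanish on $\Yset\setminus m_1$, at least $11$ nodes, so by B\'ezout ($3\cdot 3=9<11$) they share a second line $m_2$; peeling it off leaves conics vanishing on at least $6$ common nodes, and $2\cdot 2=4<6$ forces a third common line $m_3$. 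Since $q_A\neq q_B$ (otherwise $p_A^\star=p_B^\star$), the common factor is exactly $m_1m_2m_3$, giving $q_A=m_1m_2m_3\,a$ and $q_B=m_1m_2m_3\,b$ with distinct lines $a,b$. Thus $A$ and $B$ share the four lines $\wti\ell,m_1,m_2,m_3$ and differ only in the fifth.

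A counting argument then fixes the configuration. Let $M=m_1\cup m_2\cup m_3$ and let $R$ be the set of nodes lying off both $\wti\ell$ and $M$; then $A,B\in R$. Because $A$'s five lines cover $\Xset\setminus\{A\}$, every node of $R\setminus\{A\}$ lies on $a$, so $R\setminus\{A\}\subseteq a$ and likewise $R\setminus\{B\}\subseteq b$; as the distinct lines $a,b$ meet in a single point, $|R\setminus\{A,B\}|\le 1$, while Lemma~\ref{lm} makes $a$ carry at least two nodes of $R\setminus\{A\}$, so $|R|=3$, say $R=\{A,B,C\}$ with $C=a\cap b$. Consequently $M$ carries the remaining $15$ off-$\wti\ell$ nodes, and three lines with at most five nodes each covering $15$ nodes must each be a $5$-node line and be pairwise node-disjoint. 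The node $C$ is the third user of $\wti\ell$: with $c$ the line through $A,B$ (which avoids $C$, since $A,B,C$ are not collinear), the product $\wti\ell\,m_1m_2m_3\,c$ vanishes at the $20$ nodes other than $C$ and not at $C$, so by uniqueness it is $p_C^\star$. Counting primary zeroes gives the m-d sequence $(5,5,5,3,2)$ for each of $A,B,C$, with $m_1,m_2,m_3$ shared and the other two nodes exactly the primary zeroes of the fifth line ($\{B,C\}$ on $a$, $\{A,C\}$ on $b$, $\{A,B\}$ on $c$).

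It remains to show $\wti\ell$ is used \emph{exactly} three times. The $21$ nodes are now the three on $\wti\ell$, the fifteen on $M$, and $A,B,C$, so a fourth user $D$ would lie on some $m_i$, say $D\in m_1$. Writing $p_D^\star=\wti\ell\,q_D$, the four lines of $q_D$ must cover all five nodes of $m_2$ and of $m_3$; since five collinear nodes cannot be caught by any line other than the one through them, this forces $m_2,m_3$ to be factors of $q_D$, leaving only two lines to cover the four remaining (collinear) nodes of $m_1$. Neither of these can be $m_1$ (a node never uses a line through it), and each meets $m_1$ in at most one point, so they cover at most two of the four nodes -- impossible. I expect the genuine obstacle to be the first reduction of the core step: plain B\'ezout is useless there, because two quartics may honestly meet in $16$ points, so one must invoke Cayley--Bacharach against the independence of subsets of a poised set, and then keep the node bookkeeping tight enough that the cubic and conic stages strictly beat the B\'ezout bounds $9$ and $4$.
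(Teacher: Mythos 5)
Your proof is correct, but note that there is nothing in this paper to compare it against: Proposition \ref{prp:ub2} is quoted here from \cite{HJZ14} (Prop.~2.12) without proof, so what you have produced is in effect a self-contained reconstruction of the Hakopian--Jetter--Zimmermann argument, using exactly the tools this paper makes available. Your key steps are sound: the Cayley--Bacharach step is precisely the right tool where plain B\'ezout fails (the quartic cofactors $q_A,q_B$ may a priori meet in exactly $16$ points, and Theorem \ref{thm:C-B} together with the $5$-independence of subsets of a $5$-poised set excludes this), the subsequent peeling of common lines by B\'ezout ($11>9$, then $6>4$) is valid because under Assumption \ref{B} no line carries more than five nodes, and the closing arguments (the three $5$-node lines partitioning the fifteen nodes off $\wti\ell$ and off $\{A,B,C\}$, the third user $C=a\cap b$ located via Lemma \ref{lm}, and the impossibility of a fourth user $D\in m_i$ because two lines distinct from $m_1$ cannot cover four collinear nodes) are airtight. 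Three small assertions deserve the one-line justifications you omit: (1) $A,B,C$ are not collinear --- this follows since $B,C\in a$ while $a(A)\neq 0$; (2) $\wti\ell,m_1,m_2,m_3$ are pairwise distinct --- this follows from Lemma \ref{lm}, which forbids a repeated linear factor in a fundamental polynomial; (3) to conclude that the (unique, by \cite{HJZ14}) m-d sequence is $(5,5,5,3,2)$, you should observe that $(m_1,m_2,m_3,\wti\ell,a)$ is a legitimate m-line sequence: five is the maximal possible primary count under Assumption \ref{B}, and at the fourth step the line $a$ can have at most three uncounted nodes ($B$, $C$, and at most one node of $\wti\ell$), so choosing $\wti\ell$ there is admissible and leaves $B,C$ as the primary nodes of the fifth line. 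These are cosmetic touch-ups, not gaps.
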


\begin{proposition}[\cite{HJZ14}, Prop.~2.13]\label{prp:ub3}
Suppose
that a line $\wti\ell$ is used by three nodes $A$, $B$,
$C\in\Xset$. Then $\wti\ell$ passes through at least three nodes of
$\Xset$.
 
  \par
If $\wti\ell$ is a $4$-node line, then $A$, $B$, and
$C$ share $\wti\ell$ and three other lines, $\ell_2$ and $\ell_3$
passing through five and $\ell_4$ through four primary nodes. For each
of the three nodes, the \md-distribution sequence with respect to
$\wti\ell$ is $(4,5,5,4,2)$. $\wti\ell$ can only be used by $A$, $B$, and
$C$, i.e., it is used exactly three times.
\end{proposition}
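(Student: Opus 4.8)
I would establish the two assertions separately. The first is immediate: since $A$, $B$, $C$ all use $\wti\ell$, Lemma~\ref{lm} shows $\wti\ell$ passes through at least two nodes, and if it were exactly a $2$-node line then Proposition~\ref{prp:ub1} would permit at most one user, a contradiction; hence $\wti\ell$ carries at least three nodes.

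For the second assertion I assume $\wti\ell$ is a $4$-node line and, for each $X\in\{A,B,C\}$, factor $p_X^\star=\wti\ell\,q_X$ with $q_X\in\Pi_4$ a product of four lines. As $p_X^\star$ vanishes on $\Xset\setminus\{X\}$ while $\wti\ell(X)\neq0$, the quartic $q_X$ vanishes at the $16$ nodes of $\Xset\setminus\wti\ell$ other than $X$, with $q_X(X)\neq0$; in particular $X\notin\wti\ell$, so the $\wti\ell$-m-d sequence of $X$ begins with $k_1=4$, its remaining entries are nonincreasing, lie in $\{2,\dots,5\}$ (there is no maximal line), and sum to $16$, leaving the four candidate tails $(5,5,4,2)$, $(5,5,3,3)$, $(5,4,4,3)$, $(4,4,4,4)$. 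The engine of the argument is Proposition~\ref{pointsell} applied to these cofactors: whenever a line $\ell$ carries five nodes that all lie off $\wti\ell$ and differ from $X$, the quartic $q_X$ vanishes at these five collinear points, so $\ell\mid q_X$ and $X$ uses $\ell$.

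The plan is to combine this inheritance with a collinearity-covering count to force the shared structure. First I would observe that any line of $A$ carrying five primary nodes is automatically a $5$-node line meeting $\wti\ell$ in no node (otherwise it would be maximal), so by the principle above it is used by $B$ and by $C$ unless it passes through one of them. Next, once $B$ is known to use the two $5$-node lines $\ell_2,\ell_3$ of $A$, its two remaining factors must vanish at the four primary nodes of $\ell_4$; these are collinear, and a line other than $\ell_4$ meets at most one of them, so one factor must equal $\ell_4$ and $B$ uses $\ell_4$ as well. The same reasoning from $C$ shows $A$, $B$, $C$ share $\wti\ell,\ell_2,\ell_3,\ell_4$. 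Finally, since each of $A,B,C$ lies off these four shared lines, the relations $q_A(B)=q_A(C)=0$ put $B$ and $C$ on the remaining line $\ell_5$ of $A$, which is thus the side $BC$ of the triangle $ABC$ and carries exactly the two primary nodes $B,C$; symmetrising yields the $\wti\ell$-m-d sequence $(4,5,5,4,2)$ for all three nodes.

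The step I expect to be hardest is controlling the remaining cases and incidences: ruling out the tails $(5,5,3,3)$, $(5,4,4,3)$, $(4,4,4,4)$ and excluding the degenerate positions in which a large line of $A$ passes through $B$ or $C$, where the inheritance principle does not apply verbatim. The collinearity-covering count does much of this work — for example, under the tail $(4,4,4,4)$ the quartic $q_B$ would have to vanish at three further collinear primary nodes of the line through $B$ using a single factor that cannot be that line, which is impossible — and the stubborn subconfigurations can be cleared with the Cayley-Bacharach theorem (Theorem~\ref{thm:C-B}): a forbidden arrangement produces two curves whose degrees sum to $8$ and whose complete intersection consists of nodes of $\Xset$, forcing an essentially $5$-dependent subset, whereas every nonempty subset of the $5$-poised set $\Xset$ is $5$-independent. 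That $\wti\ell$ is used exactly three times then follows from the rigidity of the triangle: a fourth user $D$ would, by inheritance, use $\ell_2,\ell_3,\ell_4$ and hence have all of $A,B,C$ on its remaining line, impossible since $A,B,C$ are not collinear.
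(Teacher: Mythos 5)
This proposition is one of the results the paper imports verbatim from \cite{HJZ14} (Prop.~2.13) and states \emph{without proof}, so your argument has to stand entirely on its own merits. Its sound parts are these: the first claim does follow from Lemma~\ref{lm} together with Proposition~\ref{prp:ub1}; the enumeration of the possible tails $(5,5,4,2)$, $(5,5,3,3)$, $(5,4,4,3)$, $(4,4,4,4)$ of the $\wti\ell$-m-d sequence is correct; and your two mechanisms are valid --- a line with five primary nodes (necessarily a $5$-node line meeting $\wti\ell$ outside $\Xset$) divides the quartic cofactor $q_X$ of any user $X$ not lying on it, by Proposition~\ref{pointsell}, and two linear factors cannot cover four collinear nodes unless one of them is that line. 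The fatal problem is that this whole cascade presupposes that the tail of $A$ is $(5,5,4,2)$, i.e.\ that $A$ has two $5$-node lines available to be inherited --- and that is precisely the content of the proposition. You never rule out the other three tails. Your only concrete attempt, for $(4,4,4,4)$, is wrong: the fact that $q_B\in\Pi_4$ vanishes at the three remaining primary nodes of the line $\ell_j$ of $A$ through $B$ does \emph{not} force $\ell_j\mid q_B$; Proposition~\ref{pointsell} requires five collinear zeros for a polynomial of degree $4$, and nothing prevents three distinct linear factors of $q_B$ from each passing through one of those three nodes. So no contradiction is reached there, and the cases $(5,5,3,3)$, $(5,4,4,3)$, $(4,4,4,4)$ --- the heart of the matter --- remain entirely open; without them the inheritance argument never gets started.

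The second genuine gap is the pair of issues you explicitly defer: the degenerate incidences (a $5$-node line of $A$ passing through $B$ or $C$, and likewise for the hypothetical fourth user $D$, where inheritance fails), and the blanket appeal to Cayley--Bacharach. Theorem~\ref{thm:C-B} has sharp hypotheses: two curves of degrees $m$ and $n$ with no common component, meeting at exactly $mn$ \emph{distinct} points; and its conclusion of essential $(m{+}n{-}3)$-dependence contradicts the $5$-independence of subsets of the $5$-poised set $\Xset$ only when all $mn$ intersection points are nodes of $\Xset$. You never exhibit the two curves, never verify these hypotheses, and in the configurations at issue it is not at all evident that suitable curves exist --- producing them is exactly where the work lies. (Contrast Lemma~\ref{lem99} of the paper, where such a pair is produced explicitly: five lines through $B$ each carrying three nodes of $\Aset$, against the cubic $\ell_1\ell_2\ell_3$, meeting at precisely the $15$ nodes of $\Aset$.) As written, your proposal correctly settles the easy first statement and the final bookkeeping (the triangle argument for ``exactly three users''), but the two steps on which the proposition actually rests are asserted, not proved, and the one argument offered toward them is invalid.
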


\begin{corollary}[\cite{HJZ14}, Cor. 2.14]\label{cor:ub4}
Suppose
that a line $\wti\ell$ is used by four nodes in $\Xset$. Then
$\wti\ell$ is a $5$-node line.
\end{corollary}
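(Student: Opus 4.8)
The plan is to prove the corollary by a short case analysis on the number $k$ of nodes of $\Xset$ lying on $\wti\ell$, using the three preceding propositions as black boxes. First I would record the two elementary bounds on $k$. Since $\wti\ell$ is used by at least one node, Lemma~\ref{lm} gives $k\ge 2$; since $\Xset$ has no maximal line by Assumption~\ref{B}, and a $6$-node line would be maximal, we have $k\le 5$. Thus $\wti\ell$ is a $k$-node line with $k\in\{2,3,4,5\}$, and it suffices to rule out $k=2,3,4$ under the hypothesis that $\wti\ell$ is used by four nodes.

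Then I would dispatch each of the three excluded cases by invoking the corresponding upper-bound result. If $k=2$, Proposition~\ref{prp:ub1} says a $2$-node line is used by at most one node, contradicting four uses. If $k=3$, then since being used by four nodes in particular means $\wti\ell$ is used by two nodes, Proposition~\ref{prp:ub2} forces $\wti\ell$ to be used exactly three times, again a contradiction. If $k=4$, then since $\wti\ell$ is in particular used by three nodes, Proposition~\ref{prp:ub3} forces it to be used exactly three times, once more contradicting four uses. Having eliminated $k=2,3,4$, the only remaining possibility is $k=5$, which is the claim.

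I expect no genuine obstacle here: the corollary is really a bookkeeping consequence of Propositions~\ref{prp:ub1}--\ref{prp:ub3}, and the only points requiring care are (i) justifying $k\le 5$ from the no-maximal-line assumption, and (ii) observing that ``used by four nodes'' implies ``used by two (respectively three) nodes'', so that the hypotheses of Propositions~\ref{prp:ub2} and~\ref{prp:ub3} are genuinely met before their ``exactly three times'' conclusions are applied to derive the contradiction. No further structural analysis of the underlying configurations is needed.
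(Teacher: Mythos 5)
Your proof is correct. The paper states this corollary without proof (it is quoted from \cite{HJZ14}), and your case analysis on $k\in\{2,3,4,5\}$ --- lower bound from Lemma~\ref{lm}, upper bound from the no-maximal-line assumption, and elimination of $k=2,3,4$ via Propositions~\ref{prp:ub1}, \ref{prp:ub2}, and \ref{prp:ub3} respectively --- is precisely the intended derivation, matching the original argument in \cite{HJZ14}.
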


\begin{proposition}[\cite{HJZ14}, Prop.~2.15]\label{prp:ub5}
Suppose
that a line $\wti\ell$ is used by five nodes in $\Xset$. Then
$\wti\ell$ is a $5$-node line, and it is actually
used by exactly six nodes in $\Xset$. These six nodes form a
$GC_2$ set and share two more lines with five primary nodes each, i.e.,
each of these six nodes has the m-d sequence $(5,5,5,3,2)$.
\end{proposition}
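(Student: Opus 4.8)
The plan is to proceed in four stages: (i) identify $\wti\ell$ as a $5$-node line and push the problem onto the nodes off $\wti\ell$; (ii) show that each user of $\wti\ell$ has m-d sequence $(5,5,5,3,2)$, with the three $5$-node lines common to all of them; (iii) strip these three lines to expose a $GC_2$ configuration; and (iv) count. \textbf{Stage (i).} Since $\wti\ell$ is used by five nodes it is used by four, so Corollary~\ref{cor:ub4} gives at once that $\wti\ell$ is a $5$-node line, and by Assumption~\ref{B} it is not maximal, so it passes through exactly five nodes. Moreover a node can never use a line through itself: if $\wti\ell$ divided $p_A^\star$ with $A\in\wti\ell$, then $p_A^\star(A)=0\neq 1$. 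Hence the five users $A_1,\dots,A_5$ all lie off $\wti\ell$. Writing $p_{A_i}^\star=\wti\ell\,q_i$ with $q_i\in\Pi_4$ a product of four lines, the factor $\wti\ell$ annihilates the five nodes on $\wti\ell$, so each $q_i$ vanishes on $\Yset:=\Xset\setminus\wti\ell$ (a set of $16$ nodes) except at $A_i$, where $q_i(A_i)\neq 0$. Thus, relative to $\Yset$, every user carries a degree-$4$ fundamental polynomial that factors into four lines, and $A_1,\dots,A_5$ are $4$-independent in $\Yset$.

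\textbf{Stage (ii).} I would first produce two users, say $A_1$ and $A_2$, sharing a used line $r$ other than $\wti\ell$. By Lemma~\ref{lm} such an $r$ carries at least two nodes, and by Proposition~\ref{prp:ub1} it is not a $2$-node line; the key claim is that $r$ is a $3$-node line. Granting this, Proposition~\ref{prp:ub2} shows that $r$ is used by exactly three nodes $A_1,A_2,C$, each with m-d sequence $(5,5,5,3,2)$, sharing three further lines through five primary nodes each. Because each of these nodes uses the $5$-node line $\wti\ell$ and its m-d sequence contains exactly three $5$'s, $\wti\ell$ must be one of the three shared $5$-node lines; in particular $C$ uses $\wti\ell$, so $C\in\{A_3,A_4,A_5\}$, and the triple shares $\wti\ell$ together with two additional $5$-node lines $m_1,m_2$. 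Iterating this over the remaining users should force all five of them into the same rigid pattern, sharing the common triple $\wti\ell,m_1,m_2$.

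\textbf{The obstacle.} The hard part is exactly the claim that the shared line $r$ is a $3$-node line, equivalently that no user has m-d sequence $(5,5,4,4,2)$, $(5,5,4,3,3)$, or $(5,4,4,4,3)$. If instead $r$ were a $4$-node line, Proposition~\ref{prp:ub3} would lock its three users into the $(5,5,4,4,2)$ pattern, sharing $\wti\ell$ and only one further $5$-node line; the point to prove is that this regime pins $\wti\ell$ to a single triple and cannot be extended to accommodate five users. I expect to settle this by exploiting the full strength of the hypothesis that $\wti\ell$ is used by \emph{five} nodes, running a case analysis over the four admissible m-d sequences in \eqref{eq:5cases} (the sequence $(4,4,4,4,4)$ is discarded at once, since $\wti\ell$ already realizes $k_1=5$). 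The engine here is the Cayley--Bacharach theorem (Theorem~\ref{thm:C-B}) applied to the quartics $q_i$ of stage (i) — two such quartics sharing too many of the $16$ nodes of $\Yset$ are forced to share a linear component — together with the interchange property of Lemma~\ref{lem:dist1} and the factorization Corollary~\ref{cor22}.

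\textbf{Stages (iii)--(iv).} Once all five users are known to use $\wti\ell,m_1,m_2$, I strip these factors: for such a node $P$ the quotient $p_P^\star/(\wti\ell\,m_1\,m_2)\in\Pi_2$ is a product of two lines vanishing at every node off $\wti\ell\cup m_1\cup m_2$ except $P$. Setting $W:=\Xset\setminus(\wti\ell\cup m_1\cup m_2)$, these quotients are degree-$2$ fundamental polynomials inside $W$, so the users are $2$-independent; since $\dim\Pi_2=6$ there are at most six of them. Completing the rigid configuration — the users sit at five of the six pairwise intersections of the four $GC_2$ grid lines, and the missing intersection is forced to be a node of $\Xset$ by Cayley--Bacharach — produces the sixth user, giving exactly six. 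The three big lines then carry the remaining fifteen nodes, five each (their pairwise intersections avoiding $\Xset$), so $W$ is precisely this $GC_2$ set of six users, and uniqueness of fundamental polynomials forbids a seventh. Finally, reading off the distribution, each of the six nodes uses $\wti\ell,m_1,m_2$ (three $5$'s) together with two grid lines carrying the other five users as $3$ and $2$ primary nodes, which is the m-d sequence $(5,5,5,3,2)$, as required.
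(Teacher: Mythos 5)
The paper itself never proves this statement: Proposition \ref{prp:ub5} is quoted from \cite{HJZ14} (Prop.~2.15) and used as a black box, so your proposal can only be judged on its own merits. Judged that way, it is a plan rather than a proof, and the hole you yourself flag as ``the obstacle'' is precisely the mathematical content of the proposition. Two concrete gaps. First, the starting point of Stage (ii) --- that some two users of $\wti\ell$ share a second used line $r$ --- is asserted (``I would first produce two users\dots'') but never established; since the conclusion to be proved is exactly that all users share two further lines, this starting point already assumes a weak form of the theorem. Second, the ``key claim'' that such an $r$ must be a $3$-node line is left open, and the engine you propose for it fails quantitatively: two of your quartics $q_i$, $q_j$ are only known to have the $14$ nodes of $\Yset\setminus\{A_i,A_j\}$ in common, which is below the B\'ezout bound of $16$ for two quartics with no common component, while Theorem \ref{thm:C-B} applies only when the two curves meet in exactly $mn=16$ distinct points. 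So neither tool forces $q_i$ and $q_j$ to share a linear factor, and no amount of case analysis over \eqref{eq:5cases} is sketched in enough detail to substitute for this.

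The later stages inherit the damage. ``Iterating this over the remaining users should force\dots'' is not an argument: even granting that one triple $\{A_1,A_2,C\}$ shares $\wti\ell,m_1,m_2$, you must still show that the remaining users attach to the \emph{same} pair $m_1,m_2$ rather than to some other pair of $5$-node lines through their own triples, and nothing in Propositions \ref{prp:ub2} or \ref{prp:ub3} does this automatically. Stages (iii)--(iv) are moreover circular: you place the users at ``five of the six pairwise intersections of the four $GC_2$ grid lines'' before the $GC_2$ structure of $W$ has been established, and your exclusion of a seventh user works only if every user of $\wti\ell$ is already known to use $m_1$ and $m_2$ (hence to lie in the six-point set $W$), which is again the unproven key step. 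What is sound is Stage (i) --- Corollary \ref{cor:ub4}, the remark that no node uses a line through itself, and the reduction to degree-$4$ fundamental polynomials on the $16$-node set $\Yset$ --- together with the closing observation that once all users share three lines, linear independence of the degree-$2$ quotients caps their number at $\dim\Pi_2=6$. Everything in between, i.e.\ the combinatorial core of the proposition, is missing.
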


At the end we bring a (part of a) table from \cite{HJZ14} which follows from Propositions~\ref{prp:ub1},
\ref{prp:ub2}, \ref{prp:ub3}, Corollary~\ref{cor:ub4}, and
Proposition~\ref{prp:ub5}. It shows under which conditions a $k$-node line
$\wti\ell,\ 2\le k\le 5,$ can be used at most how often, provided that the considered $GC_5$ set has no maximal line.
\begin{equation}\label{eq:table}
  \begin{array}{c|c|c|}
    & \multicolumn{2}{c|}{\text{maximal \#\ of nodes
      using $\wti\ell$}} \\\hhline{~--}
    \text{total \#} & \text{in general} & \text{no node uses} 
      \\
    \text{of nodes}  & & (5,5,5,3,2)   \\
    \text{in $\wti\ell$} & \text{\phantom{no node uses}} &
      \text{m-d sequence}   \\
    &  &   \\\hline
    5 & 6 & 4  \\\hline
    4 & 3 & 3  \\\hline
    3 & 3 & 1 \\\hline
    2 & 1 & 1 \\\hline
  \end{array}
\end{equation}

\subsection{The case
$(5\,{,}\,5\,{,}\,5\,{,}\,3\,{,}\,2)$}\label{ssec:case55532}
\vspace{0.5cm}
In this and the following sections, we will prove the following 
\begin{proposition}\label{thm:main3} Assume that $\Xset$ is a $GC_5$ set with no maximal line. Then for no node in $\Xset$ the m-d sequence is $(5,5,5,3,2)$.
\end{proposition}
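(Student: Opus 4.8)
I would argue by contradiction: suppose some node $A\in\Xset$ has m-d sequence $(5,5,5,3,2)$, with used lines $\ell_1,\ell_2,\ell_3$ (five primary nodes each), $\ell_4$ (three primary) and $\ell_5$ (two primary). First I record the elementary structural facts. Since no maximal line exists, a common point of two of $\ell_1,\ell_2,\ell_3$ cannot be a node: if $\ell_i\cap\ell_j$ were a node it would be secondary for the later of the two lines, which would then carry six nodes. Hence the $15$ primary nodes of $\ell_1,\ell_2,\ell_3$ are distinct, and together with $A$ and the five primary nodes of $\ell_4,\ell_5$ they exhaust $\Xset$. Thus exactly six nodes — namely $A$ and the five primary nodes on $\ell_4\cup\ell_5$ — lie off the cubic $\Gamma:=\ell_1\ell_2\ell_3$; I will call these the \emph{outer} nodes and the remaining $15$ the \emph{inner} nodes.

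The key step I would use is a dimension argument that pins down every outer node at once. Let $V\subset\Pi_5$ be the space of quintics vanishing at the $15$ inner nodes. Because $\Xset$ is $5$-poised, its $21$ nodes impose independent conditions on $\Pi_5$, so the $15$ inner nodes do as well and $\dim V=21-15=6$. On the other hand $\Gamma\cdot\Pi_2\subseteq V$, since $\Gamma$ vanishes at every inner node, and multiplication by $\Gamma$ is injective, so $\dim(\Gamma\cdot\Pi_2)=\dim\Pi_2=6$. Therefore $V=\Gamma\cdot\Pi_2$. Now for any outer node $O$ the fundamental polynomial $p_O^\star$ vanishes at the $15$ inner nodes, whence $p_O^\star\in V=\Gamma\cdot\Pi_2$; so $\ell_1\ell_2\ell_3\mid p_O^\star$, i.e. every outer node uses all three of $\ell_1,\ell_2,\ell_3$. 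In particular each outer node again has m-d sequence $(5,5,5,3,2)$, and writing $p_O^\star=\ell_1\ell_2\ell_3\,c_O$ the conic $c_O$ splits into two lines vanishing at the other five outer nodes; hence the six outer nodes form a $GC_2$ set. Since by \eqref{eq:table} a $5$-node line is used at most six times, each of $\ell_1,\ell_2,\ell_3$ is used by exactly these six outer nodes. This is precisely the rigid configuration of Proposition~\ref{prp:ub5}.

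It then remains to contradict this rigid configuration, and this is where the genuine work lies; here I would analyse the inner nodes. A short covering argument shows that an inner node, say one on $\ell_1$, can use none of $\ell_1,\ell_2,\ell_3$: it cannot use $\ell_1$ (it lies on it); and if it used $\ell_2$ then, since $\ell_2\cap\ell_3$ is not a node, its remaining four factors could not cover the five nodes of $\ell_3$, so it would have to use $\ell_3$ as well, and then the three surviving factors could not cover the four other nodes of $\ell_1$. Consequently every inner node is \emph{transversal}: each of its five used lines meets $\ell_1,\ell_2,\ell_3$ in at most one node apiece, hence passes through at most three inner nodes and is a $3$-, $4$-, or $5$-node line once the outer nodes on it are counted. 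The four lines $g_1,\dots,g_4$ underlying the $GC_2$ set of outer nodes, together with these transversal lines, must supply all $15\times5=75$ used-line incidences of the inner nodes. The plan is to bound these incidences against the usage caps of \eqref{eq:table} — using Propositions~\ref{prp:ub2} and \ref{prp:ub3} to control how the $g_i$ may be used, and the scarcity of transversal lines (each with two inner nodes is determined by a pair of inner nodes lying on two of the three lines) — and to conclude that $75$ incidences cannot be realised without forcing a sixth node onto some line, i.e. a maximal line, contrary to Assumption~\ref{B}.

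The main obstacle is exactly this last incidence count on the inner nodes: the dimension argument disposes of the outer nodes in one clean stroke, but ruling out the resulting rigid $GC_2$ configuration requires a careful, somewhat case-laden accounting of which transversal lines can occur and how often each may be used, and it is there that the competing demands of covering all $75$ incidences and of avoiding a maximal line must be shown to be irreconcilable.
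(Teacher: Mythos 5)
Your first half is correct, and it is essentially the paper's Lemma~\ref{lem:AL} reproved: the dimension count ($\dim V=21-15=6=\dim(\ell_1\ell_2\ell_3\cdot\Pi_2)$, hence $V=\ell_1\ell_2\ell_3\cdot\Pi_2$) is a clean alternative to the paper's conic argument for part (i), and your covering argument for the inner nodes is equivalent to the paper's use of Corollary~\ref{cor22} for part (ii). (A small slip: a used line of an inner node may also be a $2$-node line, not only a $3$-, $4$- or $5$-node line.) But up to that point you have only reconstructed the known setup, namely that $\Bset$ is a $GC_2$ set whose six nodes use $\ell_1,\ell_2,\ell_3$, and that no node of $\Aset$ uses these lines.

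The genuine gap is everything after ``it then remains to contradict this rigid configuration'': that is the actual content of Proposition~\ref{thm:main3}, and your proposal only sketches a plan for it — a plan that, as stated, would not close. A global count of the $15\times 5=75$ used-line incidences of the inner nodes cannot be defeated by the caps of \eqref{eq:table} alone, because the supply of admissible lines is essentially unbounded: any pair of inner nodes on different $\ell_i$'s spans a candidate $2$- or $3$-node transversal line, and with up to $\binom{15}{2}$ such lines each allowed one or three uses there is far too much slack to force a contradiction. The paper closes the argument with two ideas absent from your proposal. First, a localization: fixing the single node $B=B_1\in\Bset$, every node of $\Aset$ must use at least one line of the pencil $\Lset(B)$ (its fundamental polynomial vanishes at $B$), so one only has to count $15$ incidences distributed over lines through $B$, governed by the partition identity \eqref{eq14}. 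Second, the Cayley--Bacharach theorem (Theorem~\ref{thm:C-B}) gives the crucial bound $m_3(B)\le 4$ (Lemma~\ref{lem99}): if five lines through $B$ each contained three nodes of $\Aset$, those $15$ nodes would be the full intersection of a quintic with the cubic $\ell_1\ell_2\ell_3$ and hence $5$-dependent, contradicting poisedness. Combining these with the per-line usage caps (Lemma~\ref{lem01}, Lemma~\ref{lem05}, Proposition~\ref{Prop.14}) the paper runs a three-case count on how often $\ell_{BM_1}$ is used and reaches a contradiction in each case. Without the localization at $B$ and the Cayley--Bacharach bound, the hard part of the proposition remains unproved in your proposal.
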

Assume by way of contradiction the following.
\begin{assumption}\label{C} 
$\Xset$ contains a node for
which an m-line sequence $(\ell_1,\ell_2,\ell_3,\ell_4,\ell_5)$ implies the m-d sequence $(5,5,5,3,2)$.
\end{assumption}
Set  $\Xset=\Aset \cup \Bset$ (see Fig. \ref{case1}), with
\begin{equation*}
  \Aset = \Xset \cap \{\ell_1 \cup \ell_2 \cup \ell_3 \}, \quad
  \#\Aset = 15, \quad \text{and} \quad
  \Bset = \Xset \setminus \Aset, \quad  \#\Bset = 6.
\end{equation*}

Denote $\Lset_3:=\{\ell_1,\ell_2,\ell_3\}.$
Note that no intersection point of the three lines of $\Lset_3$ belongs to $\Xset$.

Below we bring a simple proof for
\begin{lemma}[\cite{HJZ14}, Lemma 3.2]\label{lem:AL}
\nix\vspace{-2mm}
\begin{enumerate}
\setlength{\itemsep}{0mm}
\item
The set $\Bset$ is a $GC_2$ set, and each node $B\in\Bset$ uses the three lines of $\Lset_3$ and the two lines it uses within $\Bset$, i.e.,
\begin{equation}\label{ban}
  p_{B,\Xset}^\star = \ell_1 \, \ell_2 \, \ell_3 \, p_{B,\Bset}^\star
  \,.
\end{equation}
\item
No node in $\Aset$ uses any of the lines of $\Lset_3.$

\end{enumerate}
\end{lemma}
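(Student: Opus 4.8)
The plan is to reduce both parts to a single divisibility statement and then read them off. The heart of (i) is to show that every node $B\in\Bset$ uses all three lines of $\Lset_3$, i.e. that $\ell_1\ell_2\ell_3$ divides $p_{B,\Xset}^\star$. A direct attempt to prove this one line at a time via Proposition~\ref{pointsell} fails: $p_{B,\Xset}^\star$ vanishes at only the $5$ nodes of $\Aset$ lying on $\ell_1$, one short of the $6$ collinear zeros needed to force $\ell_1$ as a factor. The clean way around this is a dimension count, which I expect to be the decisive step.

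First I would observe that $\Aset$, being a subset of the poised (hence $5$-independent) set $\Xset$, is itself $5$-independent, so its $15$ nodes impose $15$ independent conditions on $\Pi_5$. Since $\dim\Pi_5=\binom{7}{2}=21$, the space $V:=\{p\in\Pi_5:\ p\equiv 0 \text{ on }\Aset\}$ has dimension $21-15=6$. On the other hand, because every node of $\Aset$ lies on one of $\ell_1,\ell_2,\ell_3$, the cubic $\ell_1\ell_2\ell_3$ vanishes on all of $\Aset$, so the injective map $q\mapsto \ell_1\ell_2\ell_3\,q$ sends $\Pi_2$ into $V$; its image has dimension $\dim\Pi_2=\binom{4}{2}=6$. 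Two $6$-dimensional spaces, one contained in the other, must coincide, so $V=\ell_1\ell_2\ell_3\,\Pi_2$. This structural fact is exactly what replaces the missing sixth collinear node.

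Now for each $B\in\Bset$ the fundamental polynomial $p_{B,\Xset}^\star$ vanishes on all of $\Aset\subset\Xset\setminus\{B\}$, hence lies in $V$, giving $p_{B,\Xset}^\star=\ell_1\ell_2\ell_3\,q$ with $q\in\Pi_2$. Since $\Xset$ is a $GC_5$ set, $p_{B,\Xset}^\star$ is a product of five lines; comparing irreducible factors shows that $\ell_1,\ell_2,\ell_3$ are three of them and that $q$ is the product of the remaining two lines. As $q$ does not vanish at $B$ but vanishes at the other five nodes of $\Bset$ (none of which lie on $\Lset_3$), it is, after normalization, $p_{B,\Bset}^\star$; this yields \eqref{ban} and shows that each $\Bset$-fundamental polynomial is a product of two lines, so $\Bset$ is a $GC_2$ set. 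This proves (i).

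Finally, (ii) follows almost for free. By (i) all six nodes of $\Bset$ use $\ell_1$, so $\ell_1$ is used by at least five nodes; Proposition~\ref{prp:ub5} then forces $\ell_1$ to be used by exactly six nodes, which must be precisely the six nodes of $\Bset$. Hence no node of $\Aset$ uses $\ell_1$, and the same argument applies to $\ell_2$ and $\ell_3$. The main obstacle throughout is the first divisibility: the naive collinear-zero count is exactly one node short, and the real content is recognizing that the numerical coincidence $\dim\Pi_5-\#\Aset=\dim\Pi_2=6$ forces divisibility by $\ell_1\ell_2\ell_3$. The only other point requiring care is the passage from ``divisible by a conic $q$'' to ``$q$ is a product of two lines'', and this is handled by unique factorization together with the $GC_5$ hypothesis.
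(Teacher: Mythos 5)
Your proposal is correct, but it follows a genuinely different route from the paper's in both parts. For (i), the paper argues by contradiction: if $\Bset$ were not $2$-poised, its six nodes would lie on a conic $\Cset$, and then $\ell_1\,\ell_2\,\ell_3\,\Cset\in\Pi_5$ would vanish on all of $\Xset$, contradicting Proposition~\ref{poisedii}; formula \eqref{ban} then follows from uniqueness of the fundamental polynomial, since $\ell_1\,\ell_2\,\ell_3\,p^\star_{B,\Bset}$ interpolates the correct data. You instead identify the whole space $V$ of quintics vanishing on $\Aset$ as $\ell_1\ell_2\ell_3\,\Pi_2$ by a dimension count and read \eqref{ban} off from the membership of $p^\star_{B,\Xset}$ in $V$. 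Both arguments turn on the same numerical coincidence $21-15=\dim\Pi_2$, but yours proves a slightly stronger structural fact (every quintic vanishing on $\Aset$ is divisible by $\ell_1\ell_2\ell_3$) and obtains $2$-poisedness of $\Bset$ as a consequence rather than as a prerequisite; the one step you should make explicit is that $2$-independence of the six nodes of $\Bset$ together with $\#\Bset=6=\dim\Pi_2$ yields $2$-poisedness, which is needed before you may speak of \emph{the} fundamental polynomial $p^\star_{B,\Bset}$ or call $\Bset$ a $GC_2$ set. For (ii), the paper gives a short self-contained argument: if some $A\in\ell_1$ used $\ell_2$, then $q=p_A^\star/\ell_2$ has $(5,4)$ primary zeros on $(\ell_3,\ell_1)$, so Corollary~\ref{cor22} forces $\ell_1$ to divide $p_A^\star$, contradicting $p_A^\star(A)=1$. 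You instead invoke Proposition~\ref{prp:ub5}: since the six nodes of $\Bset$ already use each $\ell_i$, that proposition caps the users at exactly those six. This is shorter but rests on the deepest of the results imported from \cite{HJZ14}, whereas the paper's version needs only the elementary Corollary~\ref{cor22}; your use is nevertheless legitimate here, with no circularity, since Proposition~\ref{prp:ub5} is Prop.~2.15 of \cite{HJZ14} and thus precedes the Lemma~3.2 being proved.
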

\begin{figure}
\begin{center}
\includegraphics[width=10.0cm,height=5.cm]{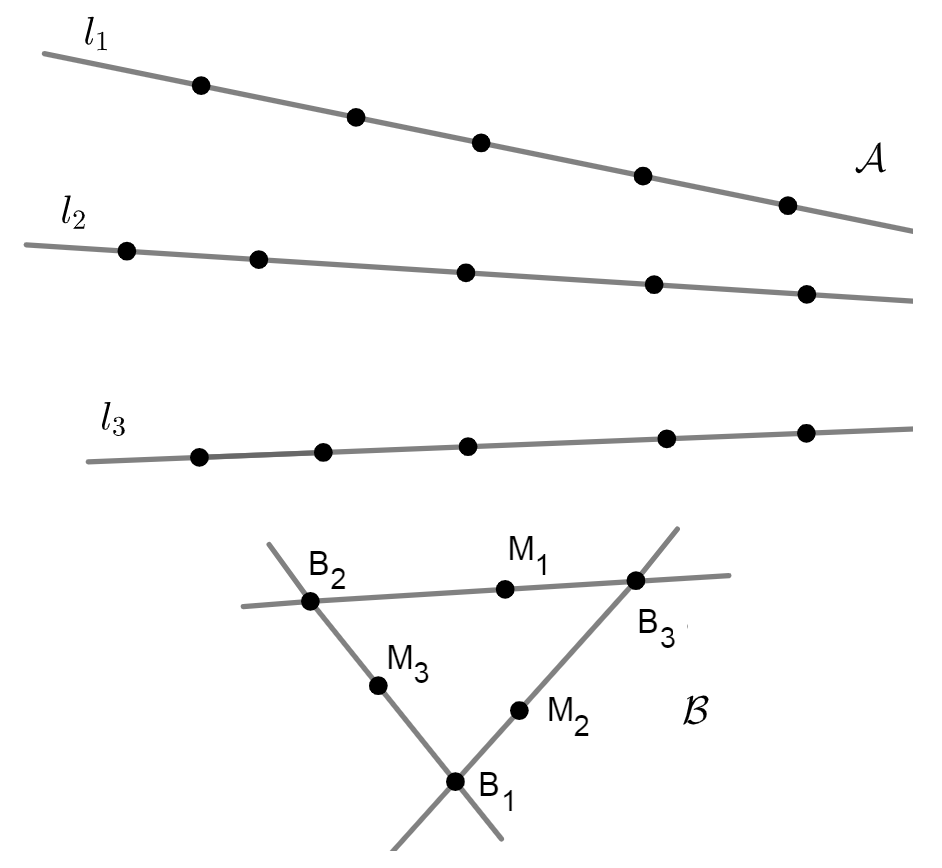}
\end{center}
\caption{The case 
$(5,5,5,3,2)$ with $\Xset=\Aset\cup\Bset.$} \label{case1}
\end{figure}

\begin{proof}
(i) Suppose by way of contradiction that the set $\Bset$ is not $2$-poised, i.e., it is a subset of a conic $\Cset.$ Then $\Xset$ is a subset of the zero set of the polynomial $\ell_1 \, \ell_2 \, \ell_3 \, \Cset,$ which contradicts Proposition \ref{poisedii}. Then we readily obtain the formula \eqref{ban}. 

(ii) Without loss of generality assume that $A \in \ell_1$ uses the line $\ell_2.$ Then $p_{A}^\star = \ell_2\,q,$ where $q \in \Pi_4.$ It is easily seen that $q$ has (5,4) primary zeros in the lines $(\ell_3 , \ell_1).$ Therefore, in view of Corollary \ref{cor22}, we obtain that $p_{A}^\star = \ell_2\,\ell_3\,\ell_1\,r,$ which is a contradiction.
 \par

\end{proof}

Evidently, any node in a $GC_2$ set uses a maximal line, i.e., $3$-node line. Hence we conclude readily that any $GC_2$ set, including also $\Bset,$ possesses  at least three maximal lines (see Figure \ref{case1}).

A node $A\in\Xset$ is called a $2_m$-node if it is the intersection point of two maximal lines. Note that the nodes $B_i,\ i=1,2,3,$ in Fig. \ref{case1}, are $2_m$-nodes for $\Bset.$

\begin{definition} We say, that a line $\ell$ is a $k_\Aset$-node line if it passes through exactly $k$ nodes of $\Aset.$\end{definition}

\begin{lemma}\label{lem01}
(i) Assume that a line $\wti\ell\notin\Lset_3$  does not intersect  a line $\ell\in\Lset_3$ at a node in $\Xset.$ Then the line 
$\wti\ell$ can be used at most by one node from $\Aset.$ Moreover, this latter node  belongs to $\ell\cap\Aset.$ \par
(ii) If a line $\ell$ is $0_{\Aset}$ or $1_{\Aset}$-node line then no node from $\Aset$ uses the line $\ell.$ 
\par
(iii) If a line $\ell$ is $2_{\Aset}$-node line then $\ell$ can be used by at most one node from $\Aset.$ \par
(iv) Suppose $\ell$ is a maximal line in $\Bset.$ Then $\ell$ can be used by at most one node from $\Aset.$
\end{lemma}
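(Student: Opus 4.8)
The plan is to prove the four parts of Lemma~\ref{lem01} by systematically applying Corollary~\ref{cor22} together with the structural facts already established in Lemma~\ref{lem:AL}. The unifying idea throughout is: if a node $A\in\Aset$ uses an auxiliary line $\wti\ell$, then $p_A^\star=\wti\ell\,q$ with $q\in\Pi_4$, and one counts how the $20$ remaining nodes of $\Xset\setminus\{A\}$ must be absorbed as primary zeros of $q$ among the lines of $\Lset_3$ (recall by Lemma~\ref{lem:AL}(ii) that $A$ does \emph{not} use any line of $\Lset_3$, so those lines have to be divided out of $q$, not out of $\wti\ell$). The main obstacle to anticipate is the bookkeeping of primary versus secondary zeros at the intersection points of $\wti\ell$ with the lines of $\Lset_3$: whether such an intersection point lies in $\Xset$ determines whether a node is ``used up'' on $\wti\ell$ or remains available on a line of $\Lset_3$, and this is precisely the distinction that drives the counting.

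First I would handle (i), which is the workhorse. Suppose $\wti\ell\notin\Lset_3$ misses every line of $\Lset_3$ at the nodes of $\Xset$, and suppose $A\in\Aset$ uses $\wti\ell$, so $p_A^\star=\wti\ell\,q$. Since $A\in\Aset$, it lies on at least one line of $\Lset_3$, say $\ell_{j}$; moreover the three lines $\ell_1,\ell_2,\ell_3$ each carry $5$ nodes of $\Xset$ with no triple intersection in $\Xset$. Because $\wti\ell$ meets each $\ell_i$ at a point outside $\Xset$, none of the $15$ nodes of $\Aset$ lies on $\wti\ell$ except possibly by being a zero of $q$. The key step is to argue that $q$ is forced to have $(5,5,4)$ primary zeros in an appropriately ordered triple from $\Lset_3$, whence Corollary~\ref{cor22} gives $p_A^\star=\wti\ell\,\ell_{i_1}\ell_{i_2}\ell_{i_3}\,r$ — a product of four lines times a nonconstant factor, contradicting that $p_A^\star$ is a product of exactly $5$ lines — \emph{unless} $A$ itself sits on the line whose count drops to $4$, i.e.\ $A\in\ell\cap\Aset$ for the relevant $\ell\in\Lset_3$. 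This pins down both the ``at most one node'' conclusion (two such nodes would over-determine the zeros) and the location of that node.

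For (ii) and (iii) I would specialize the same counting. If $\ell$ is a $0_\Aset$- or $1_\Aset$-node line, then a node $A\in\Aset$ using $\ell$ would force $q\in\Pi_4$ to vanish on essentially all of $\Aset\setminus\{A\}$ distributed over $\Lset_3$, again producing four linear factors via Corollary~\ref{cor22} and contradicting the $GC_5$ structure; the paucity of nodes of $\Aset$ on $\ell$ leaves no room to relax one of the three counts, so \emph{no} node of $\Aset$ can use $\ell$. For (iii), a $2_\Aset$-node line can absorb at most two nodes of $\Aset$ on $\ell$ itself, which permits exactly one of the three $\Lset_3$-counts to drop by one, so by the dichotomy in the proof of (i) at most one node of $\Aset$ can use $\ell$. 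Finally, for (iv), I would invoke Lemma~\ref{lem:AL}(i): a maximal line $\ell$ of $\Bset$ passes through exactly $3$ nodes of $\Bset$ and, since the intersection points of $\Lset_3$ are not in $\Xset$ and $\Bset=\Xset\setminus\Aset$, such a line meets $\Aset$ in at most two points; thus $\ell$ is (at most) a $2_\Aset$-node line and part (iii) applies directly to yield the bound of one node from $\Aset$. The delicate point in (iv) will be verifying the exact number of nodes of $\Aset$ on a $\Bset$-maximal line, which follows from the node distribution $\#\Aset=15$, $\#\Bset=6$ together with the no-triple-point property of $\Lset_3$.
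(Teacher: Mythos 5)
Your proof of (i) rests on a misreading of the hypothesis, and this breaks the whole chain. The lemma assumes only that $\wti\ell$ misses \emph{one} line $\ell\in\Lset_3$ at the nodes of $\Xset$; the other two lines of $\Lset_3$ may perfectly well meet $\wti\ell$ at nodes. You instead assume $\wti\ell$ misses \emph{every} line of $\Lset_3$ at nodes, which is exactly the statement that $\wti\ell$ is a $0_{\Aset}$-node line. Under that stronger hypothesis your count $(5,5,4)$ is available and Corollary \ref{cor22} gives a contradiction \emph{no matter where} $A\in\Aset$ lies, so the ``escape clause'' you invoke (``unless $A$ sits on the line whose count drops to $4$'') never materializes: you would simply be proving the $0_{\Aset}$ case of (ii), not (i). Under the actual hypothesis the guaranteed counts are $(5,4,3)$ when $A$ lies on a line of $\Lset_3$ other than $\ell$ (one node on each of the other two lines may be absorbed by $\wti\ell$), and they degrade to $(4,4,4)$ when $A\in\ell$, where Corollary \ref{cor22} is powerless --- which is precisely why users on $\ell$ cannot be excluded and the ``moreover'' clause appears. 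Note also that your stated contradiction is not one: $p_A^\star=\wti\ell\,\ell_{i_1}\ell_{i_2}\ell_{i_3}\,r$ with $r\in\Pi_1$ \emph{is} a product of five linear factors, entirely compatible with the $GC_5$ property. The real contradiction is that one of the factors is a line of $\Lset_3$ containing $A$ itself, forcing $p_A^\star(A)=0\neq1$.

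The second genuine gap is that the ``at most one node'' assertions in (i), (iii), (iv) are never proved: ``two such nodes would over-determine the zeros'' is not an argument, and a naive count in fact fails. If $A,B\in\ell\cap\Aset$ both use $\wti\ell$ (say $\ell=\ell_1$), then $q=p_A^\star/\wti\ell$ is only guaranteed $(4,4,4)$ primary zeros on the three lines, too few for Corollary \ref{cor22}. The paper closes this with a device absent from your proposal: pick $C\in\ell_2\setminus(\wti\ell\cup\Xset)$ and a nontrivial combination $p=\alpha p_A^\star+\beta p_B^\star$ with $p(C)=0$; since $\wti\ell$ divides both fundamental polynomials, $p=\wti\ell\,q$, and the extra point $C$ restores the counts to $(5,4,3)$ in $(\ell_2,\ell_3,\ell_1)$, whence $p=\wti\ell\,\ell_1\ell_2\ell_3\,r$ and thus $p(A)=p(B)=0$, i.e.\ $\alpha=\beta=0$, a contradiction. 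Since in the paper (iii) follows from (i) (the two $\Aset$-nodes of a $2_{\Aset}$-node line occupy two of the three lines of $\Lset_3$, so the third is missed at nodes) and (iv) follows from (iii), your versions of (iii) and (iv) inherit the gap. Finally, in (iv) the bound ``at most two $\Aset$-nodes on a $\Bset$-maximal line'' comes from Assumption \ref{B}: three $\Bset$-nodes plus three $\Aset$-nodes would make a $6$-node, i.e.\ maximal, line of $\Xset$; it does not follow from the no-triple-point property of $\Lset_3$ that you cite.
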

\begin{proof}
(i) Without loss of generality assume that $\ell=\ell_1$ and $A \in \ell_2 $ uses $\wti\ell:$ 
\begin{equation*}
    p_{A}^\star =\wti\ell\,q,\ q \in \Pi_4.
\end{equation*}
It is easily seen that $q$ has $(5,4,3)$ primary zeros in the lines $(\ell_1 , \ell_3 , \ell_2).$ Therefore, in view of Corollary \ref{cor22}, we conclude that 
$p_{A}^\star = \wti\ell\,\ell_1\,\ell_2\,\ell_3\,r,\ r\in \Pi_1,$ 
which is a contradiction.

Now assume conversely that $A,B \in \ell_1\cap\Xset$ use the line $\wti\ell.$ Choose a point $C \in \ell_2\setminus(\wti\ell\cup\Xset).$ Then choose numbers $\alpha$ and $\beta,$ with $|\alpha| + |\beta| \neq 0,$ such that $p(C)=0,$ where $p:=\alpha p_A^\star + \beta p_B^\star.$ It is easily seen that $p =\wti\ell\,q,\ q \in \Pi_4$ and  the polynomial $q$ has $(5,4,3)$ primary zeros in the lines $(\ell_2,\ell_3,\ell_1).$ Therefore $p = \wti\ell\ell_1\,\ell_2\,\ell_3\,q,$ where $q\in\Pi_1.$ Thus $p(A) = p(B) = 0,$ implying that $\alpha=\beta = 0$, which is a contradiction.

\par
The items (ii) and (iii) readily follow from (i).
The item (iv) readily  follows from (iii).
\end{proof}
\noindent Denote by $\ell_{AB}$ the line passing through the points $A$ and $B.$
\begin{proposition}\label{Prop.14}
Let $\ell_{B_1M_1}$ be 5-node line, which is used by all the six nodes of a subset $\Aset_6\subset\Aset.$ Suppose also that  $\ell$ is a 4-node line passing through $B_1$. If the line $\ell$ is used by three nodes from $\Aset$ then all these three nodes belong to $\Aset_6.$
\end{proposition}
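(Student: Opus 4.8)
The plan is to first fix the incidence pattern of $\ell$, then use Proposition~\ref{prp:ub3} to expose the lines shared by the three users of $\ell$, and finally to identify $\ell_{B_1M_1}$ with one of those shared lines. Since, by Proposition~\ref{prp:ub5}, $\Aset_6$ is exactly the set of the six nodes using $\ell_{B_1M_1}$, such an identification immediately places the three users in $\Aset_6$. To start, write $X,Y,Z$ for the three nodes of $\Aset$ using $\ell$. By Lemma~\ref{lem01}(ii),(iii) a line used by more than one node of $\Aset$ cannot be a $0_{\Aset}$-, $1_{\Aset}$-, or $2_{\Aset}$-node line, so $\ell$ carries at least three nodes of $\Aset$; as $\ell$ meets each line of $\Lset_3$ in at most one point and these three lines partition $\Aset$, the $4$-node line $\ell$ contains exactly one node on each of $\ell_1,\ell_2,\ell_3$, so $\ell\cap\Aset$ consists of three nodes and $\ell\cap\Bset=\{B_1\}$.

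Next I would apply Proposition~\ref{prp:ub3} to $\ell$: its users $X,Y,Z$ share $\ell$ together with three further lines $g_2,g_3$ (each through five primary nodes, hence $5$-node lines) and $g_4$ (through four primary nodes), with common $\wti\ell$-m-d sequence $(4,5,5,4,2)$. In particular all of $X,Y,Z$ use both $g_2$ and $g_3$. Since no node of $\Aset$ uses a line of $\Lset_3$ by Lemma~\ref{lem:AL}(ii), none of $g_2,g_3,g_4$ lies in $\Lset_3$, so Lemma~\ref{lem01}(i) applies: each of them meets every line of $\Lset_3$ at a node of $\Xset$. Hence $g_2$ and $g_3$ each carry exactly three nodes of $\Aset$ and two of $\Bset$, while $g_4$ carries three nodes of $\Aset$ and one of $\Bset$.

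It remains to match $\ell_{B_1M_1}$ with one of the $5$-node lines $g_2,g_3$. Here I would use the factorization tool: writing $p^\star_{X}=\ell\,q$ with $q\in\Pi_4$, the quartic $q$ must vanish at every node of $\ell_{B_1M_1}$ lying off $\ell$. As $\ell$ and $\ell_{B_1M_1}$ meet only at $B_1$, these are precisely the four nodes of $\ell_{B_1M_1}$ other than $B_1$, so $q$ already has four collinear zeros on $\ell_{B_1M_1}$. If one can exhibit a fifth zero of $q$ on $\ell_{B_1M_1}$ — equivalently, show $q(B_1)=0$, i.e. that $B_1$ lies on one of the remaining factors $g_2,g_3,g_4$ of $p^\star_{X}$ — then Proposition~\ref{pointsell} forces $\ell_{B_1M_1}\mid q$, so $X$ uses $\ell_{B_1M_1}$; the same argument for $Y$ and $Z$ then puts all three in $\Aset_6$.

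I expect this fifth incidence to be the main obstacle. The bare count of $\Bset$-nodes on $\ell,g_2,g_3,g_4$ is $1,2,2,1$, which already exhausts the six nodes of $\Bset$, so nothing forces $B_1$ onto a second of these lines by counting alone; the coincidence must instead be extracted from the configuration of $\Aset_6$, $B_1$ and $M_1$ fixed earlier — concretely, from the way the two $\Bset$-nodes of $\ell_{B_1M_1}$ (one being $B_1$) sit relative to the shared lines of $X,Y,Z$. I would also dispose separately of the degenerate possibility that $X$ is itself one of the three $\Aset$-nodes of $\ell_{B_1M_1}$, in which case $X$ cannot use $\ell_{B_1M_1}$ and $q$ loses one guaranteed zero; this branch should be excluded by noting that the $\Aset$-nodes on $\ell_{B_1M_1}$ are primary nodes of the $5$-node lines common to $\Aset_6$ and hence cannot be users of $\ell$.
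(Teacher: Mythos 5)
Your setup (the incidence pattern of $\ell$: three nodes of $\Aset$ plus $B_1$, and the application of Proposition~\ref{prp:ub3} to the three users $X,Y,Z$) matches the paper's starting point, but your proof stops exactly where the proposition actually lives: the ``fifth incidence'' $q(B_1)=0$ that you defer is not a technical detail, it is the entire content of the statement, and your proposal offers no mechanism for producing it. Worse, the identification you aim for is structurally impossible. By the $(4,5,5,4,2)$ sequence of Proposition~\ref{prp:ub3}, each of $g_2,g_3$ has five primary nodes off $\ell$; since no line carries six nodes (Assumption~\ref{B}), \emph{all} nodes of $g_2$ and of $g_3$ lie off $\ell$, so neither line can pass through $B_1\in\ell$, whereas $\ell_{B_1M_1}$ does. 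Hence $\ell_{B_1M_1}$ could only ever coincide with $g_4$, and the missing fifth zero $q(B_1)=0$ would require precisely $B_1\in g_4\cup g_5$ --- i.e.\ it presupposes what you are trying to prove. The closing remark about the ``degenerate'' case is likewise unsupported: nothing you state prevents a node of $\ell_{B_1M_1}\cap\Aset$ from using $\ell$.

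The paper closes the gap with a global argument, by contradiction, that your proposal never touches. First, Proposition~\ref{prp:ub5} applied to $\ell_{B_1M_1}$ shows that the six nodes of $\Aset_6$ share two \emph{further} lines with five primary nodes, which, from the structure of the $GC_2$ set $\Bset$, must be $\ell_{B_2M_2}$ and $\ell_{B_3M_3}$. Now suppose some user $D_1$ of $\ell$ lies outside $\Aset_6$. Your lines $g_2,g_3$ each contain two nodes of $\Bset$ (as you correctly computed); by Lemma~\ref{lem01}(iv) they are not maximal lines of $\Bset$, and they avoid $B_1$, so each passes through one of the pairs $\{B_2,M_2\}$, $\{B_3,M_3\}$, $\{M_1,M_2\}$, $\{M_2,M_3\}$, $\{M_1,M_3\}$. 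Since $g_2$ and $g_3$ share no node of $\Xset$, they cannot both arise from the three pairs of $M_i$'s, so one of them is $\ell_{B_2M_2}$ or $\ell_{B_3M_3}$ --- a line already used by all six nodes of $\Aset_6$. Together with $D_1$ that makes seven users, contradicting Proposition~\ref{prp:ub5}. This enumeration of the admissible five-node lines through pairs of $\Bset$-nodes, and the resulting seven-user contradiction, is the idea your proposal is missing; no purely local factorization of $p^\star_X$ appears able to replace it.
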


\begin{proof} The six nodes of $\Aset_6$ use the $5$-node line $\ell_{B_1M_1}.$ Therefore, in view of Proposition \ref{prp:ub5}, these six nodes share also two more lines passing through five primary nodes. It is easily seen that these latter two lines are the lines $\ell_{B_2M_2}$ and $\ell_{B_3M_3}.$ Assume by way of contradiction that the nodes $D_1, D_2, D_3\in\Aset$ are using the line $\ell$ and $D_1\notin\Aset_6.$ According to Proposition \ref{prp:ub3} these three nodes share also two lines passing through five primary nodes. In view of Lemma \ref{lem01}, (iv), these latter two lines cannot be maximal lines in $\Bset.$ Therefore they belong to the set $\{\ell_{B_2M_2} ,  \ell_{B_3M_3} , \ell_{M_1M_2} , \ell_{M_2M_3} , \ell_{M_1M_3}\}$. One of them should be $\ell_{B_2M_2}$ or $\ell_{B_3M_3}$, since any two lines from $\{\ell_{M_1M_2} , \ell_{M_2M_3} , \ell_{M_1M_3}\}$ share a node. Therefore one of them will be used by seven nodes, namely by $D_1$ and the nodes of $\Aset_6.$  This contradicts Proposition \ref{prp:ub5}.
\end{proof}

 \subsection{The proof of Proposition \ref{thm:main3}}

Consider all the lines passing through $B:=B_1$ and at least one more node of ${\mathcal X}$. Denote the set of these lines by $\Lset(B).$  Let $m_k(B),\ k=1,2,3,$ be the number of $k_{\Aset}$-node lines from $\Lset(B).$ 

We have that \begin{equation}\label{eq14}
  1m_1(B) + 2m_2(B) + 3m_3(B)   =
  \# \Aset = 15 .
\end{equation}
\begin{lemma}\label{lem05}
Suppose that a line $\ell,$ passing through $B$ and different from the line $\ell_{BM_1},$ is  a $3_{\Aset}$-node line. Then $\ell$ can be used by at most three nodes from $\Aset.$

\end{lemma}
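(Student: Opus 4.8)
The statement concerns a $3_\Aset$-node line $\ell$ through $B=B_1$, distinct from $\ell_{BM_1}$, and we want to bound at three the number of nodes of $\Aset$ that can use $\ell$. The natural starting point is to ask how many times $\ell$ can be used \emph{in total} by nodes of $\Xset$, and then subtract off the contributions that cannot come from $\Aset$. Since $\ell$ passes through $B$, it also may pass through nodes of $\Bset$, so the relevant quantity from the table \eqref{eq:table} is the total number of nodes of $\Xset$ lying on $\ell$, not just the number in $\Aset$. The plan is to split into cases according to how many nodes of $\Bset$ lie on $\ell$ beyond $B$ itself, and in each case invoke the appropriate row of \eqref{eq:table}.

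\textbf{Key steps.} First I would observe that $\ell$ meets $\Aset$ in exactly three nodes and passes through $B\in\Bset$, so $\ell$ is at least a $4$-node line of $\Xset$ (three from $\Aset$, one from $\Bset$). If $\ell$ contains no further node of $\Xset$, it is exactly a $4$-node line, and by Proposition~\ref{prp:ub3} (or row $4$ of \eqref{eq:table}) it is used by at most three nodes total, hence by at most three nodes of $\Aset$, which is the desired bound. If $\ell$ passes through a second node of $\Bset$, then $\ell$ is at least a $5$-node line of $\Xset$; here I would use Proposition~\ref{prp:ub5} together with the structure it forces. The crux is that if $\ell$ were used by four or more nodes of $\Aset$, then combined with the nodes of $\Bset$ on $\ell$ it would be used by too many nodes, and the rigid configuration in Proposition~\ref{prp:ub5}—where the six users form a $GC_2$ set sharing two further $5$-primary-node lines—would have to be realized, which I would rule out by exploiting that $\ell\neq\ell_{BM_1}$ and that $\ell_{BM_1}$ already plays the role of the distinguished $5$-node line through $B$.

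\textbf{Main obstacle.} The hard part will be the case where $\ell$ is a genuine $5$-node line of $\Xset$ passing through two nodes of $\Bset$. In that situation the table permits $\ell$ to be used by as many as six nodes, so the naive count does not immediately give the bound three on the $\Aset$-users. I expect to need the constraint $\ell\neq\ell_{BM_1}$ in an essential way: since $B=B_1$ already uses the maximal-in-$\Bset$ line $\ell_{BM_1}$ and, by Proposition~\ref{prp:ub5} applied to $\ell_{BM_1}$, belongs to a specific $GC_2$-configuration, a competing $5$-node line $\ell$ through $B$ with many $\Aset$-users would force two distinct rigid configurations to coexist, producing a line used by seven nodes and contradicting Proposition~\ref{prp:ub5}, exactly as in the argument of Proposition~\ref{Prop.14}. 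Tying the two configurations together carefully, so that the seven-fold usage is forced, is where the real work lies.
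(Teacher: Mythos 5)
Your first case --- where $\ell$ meets $\Bset$ only at $B$, so that $\ell$ is a $4$-node line of $\Xset$ and Proposition~\ref{prp:ub3} bounds its total number of users by three --- is exactly the paper's argument. The genuine gap is your second case, where $\ell$ contains a second node of $\Bset$: there you offer only a hoped-for contradiction via Proposition~\ref{prp:ub5} and ``two coexisting rigid configurations,'' and you yourself concede that forcing the seven-fold usage ``is where the real work lies.'' That work is never needed, because this case is vacuous, and the reason is elementary geometry of the $GC_2$ set $\Bset$, not any usage count. In the configuration of Figure~\ref{case1}, $B=B_1$ lies on two maximal ($3$-node) lines of $\Bset$, and these two lines together contain every node of $\Bset$ except $M_1$. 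Hence for every $X\in\Bset\setminus\{B,M_1\}$ the line $\ell_{BX}$ is a maximal line of $\Bset$, i.e., already contains a third node of $\Bset$; the line $\ell_{BM_1}$ is the \emph{unique} line through $B$ meeting $\Bset$ in exactly two nodes. So if your $\ell\neq\ell_{BM_1}$ contained a second node of $\Bset$, it would be a maximal line of $\Bset$ and, being a $3_\Aset$-node line, would contain $3+3=6$ nodes of $\Xset$ --- a maximal line of $\Xset$, contradicting Assumption~\ref{B}. This is the entire content of the paper's two-line proof: $\ell$ is not maximal in $\Bset$, hence meets $\Bset$ only at $B$, hence is a $4$-node line, and Proposition~\ref{prp:ub3} finishes.

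A symptom of why your intended route was going astray: you assert that ``$B=B_1$ already uses the maximal-in-$\Bset$ line $\ell_{BM_1}$.'' This is wrong on both counts. First, $\ell_{BM_1}$ is not a maximal line of $\Bset$; it is a $2$-node line of $\Bset$ joining the two ``opposite'' nodes $B_1$ and $M_1$ (which is precisely why it is the one excluded line in the statement: it is the only candidate through $B$ for a $5$-node $3_\Aset$-line, the situation treated separately in Case~1 and Case~3 of the proof of Proposition~\ref{thm:main3}). Second, no node can use a line passing through itself, since its fundamental polynomial does not vanish there; by Lemma~\ref{lem:AL}(i), what $B_1$ actually uses inside $\Bset$ is the maximal line of $\Bset$ through $B_2,M_1,B_3$ together with $\ell_{M_2M_3}$. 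So the special role of $\ell_{BM_1}$ is purely that of the unique possible $5$-node line among the $3_\Aset$-node lines through $B$, and once it is excluded the lemma reduces to the $4$-node row of table~\eqref{eq:table}.
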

\begin{proof} Note that $\ell$ is not a maximal line for $\Bset,$ since otherwise $\ell$ will be a maximal line for $\Xset.$ Therefore $\ell$ is a $4$-node line and Proposition \ref{prp:ub3} completes the proof.
\end{proof}
 \begin{lemma}\label{lem99} We have that
 $m_3(B) \leq 4.$
 \end{lemma}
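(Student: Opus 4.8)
The plan is to start from the incidence identity \eqref{eq14}. Since every summand is nonnegative and the $3_\Aset$-node lines contribute $3m_3(B)$, we get at once $3m_3(B)\le 15$, i.e. $m_3(B)\le 5$. It therefore suffices to exclude the extremal value $m_3(B)=5$, and the whole argument is a proof by contradiction against that equality.

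Assume $m_3(B)=5$. Then \eqref{eq14} forces $m_1(B)=m_2(B)=0$, so \emph{every} line through $B$ that meets $\Aset$ meets it in exactly three nodes, and these five lines partition the fifteen nodes of $\Aset$. Since $B\notin\ell_i$ for $i=1,2,3$, each such line meets $\ell_1,\ell_2,\ell_3$ in exactly one node of $\Aset$ apiece; in particular the five lines set up a perspectivity from $B$ between the five-node sets $\ell_1\cap\Aset$, $\ell_2\cap\Aset$, $\ell_3\cap\Aset$. One of the five lines is $\ell_{BM_1}$, a $5$-node line carrying $B$, $M_1$, and three nodes of $\Aset$.

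Next I would locate the remaining nodes of $\Bset$. The node $B=B_1$ is a $2_m$-node, lying on exactly two maximal lines of $\Bset$, which together carry $B$ and the four further nodes $B_2,B_3,M_2,M_3$, while $M_1$ is the sixth node, lying off both. Each of these two maximal $\Bset$-lines already carries three nodes of $\Bset$; as $\Xset$ has no maximal line, it can carry at most two nodes of $\Aset$, so it cannot be a $3_\Aset$-node line. But in the case $m_3(B)=5$ any line through $B$ that meets $\Aset$ at all is a $3_\Aset$-node line, so both maximal $\Bset$-lines through $B$ must avoid $\Aset$ entirely. Consequently $B_2,B_3,M_2,M_3$ lie off all five lines through $B$, and the four lines other than $\ell_{BM_1}$ carry no node of $\Bset$ besides $B$; hence they are $4$-node lines. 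I would then extract the contradiction from the fine structure of $\ell_{BM_1}$: by Proposition \ref{prp:ub5} the line $\ell_{BM_1}$ is used by a six-element $GC_2$ subset $\Aset_6\subset\Aset$ sharing the two further five-node lines $\ell_{B_2M_2}$ and $\ell_{B_3M_3}$, while by Proposition \ref{Prop.14} any of the four $4$-node lines through $B$ that is used by three nodes of $\Aset$ must have those three nodes inside $\Aset_6$. Reconciling this forced placement of $\Aset_6$ among the four $4$-node lines with the $(5,5,5,3,2)$-structure that Proposition \ref{prp:ub5} imposes on $\Aset_6$ over-fills the available nodes and makes one of $\ell_{B_2M_2},\ell_{B_3M_3}$ be used more often than Propositions \ref{prp:ub3} and \ref{prp:ub5} permit.

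I expect the main obstacle to be precisely this last reconciliation. The counting reduction to $m_3(B)=5$, and the observation that the two maximal $\Bset$-lines through $B$ must avoid $\Aset$, are routine consequences of \eqref{eq14} and Lemma \ref{lem01}. The delicate part is turning the ``perspectivity from $B$'' picture into a genuine contradiction: one must pin down how the six nodes of $\Aset_6$ distribute among $\ell_1,\ell_2,\ell_3$ and among the four $4$-node lines, and then show that this distribution cannot coexist with the usage bounds on the shared five-node lines $\ell_{B_2M_2},\ell_{B_3M_3}$. If this final step proves awkward, I would instead try to contradict $5$-poisedness directly by exhibiting a nonzero degree-$5$ polynomial built from $\ell_1\ell_2\ell_3$ and the two empty maximal $\Bset$-lines that vanishes at too many nodes.
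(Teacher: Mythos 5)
Your opening reduction (from \eqref{eq14}, $m_3(B)\le 5$, so it suffices to exclude $m_3(B)=5$) matches the paper's first step, and your observation that $m_3(B)=5$ forces $m_1(B)=m_2(B)=0$, so that the five lines through $B$ each meet $\ell_1,\ell_2,\ell_3$ in exactly one node of $\Aset$ and together partition $\Aset$, is correct. But from there you head in a direction that does not close, and you say so yourself: the claimed contradiction (``reconciling this forced placement \dots over-fills the available nodes'') is asserted, not derived, and it is exactly the step you flag as the obstacle. Worse, the setup feeding into it is unjustified: invoking Proposition \ref{prp:ub5} to produce the six-node set $\Aset_6$ requires knowing that $\ell_{BM_1}$ is used by at least five nodes, and nothing in the hypothesis $m_3(B)=5$ gives this (in the paper, the number of users of $\ell_{BM_1}$ is a three-way case distinction made later, in the proof of Proposition \ref{thm:main3}, and Lemma \ref{lem99} must hold in all cases since it is cited in Cases 2 and 3). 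Your fallback also fails: the degree-$5$ polynomial $\ell_1\ell_2\ell_3\, b_1 b_2$, where $b_1,b_2$ are the two maximal $\Bset$-lines through $B$, vanishes at precisely the $20$ nodes other than $M_1$, so it is simply a nonzero multiple of $p_{M_1}^\star$ --- consistent with poisedness, not a contradiction.

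The missing idea is the Cayley--Bacharach theorem (Theorem \ref{thm:C-B}), which finishes the proof in one stroke. If $m_3(B)=5$, the union of the five lines through $B$ is a curve of degree $5$, and $\ell_1\ell_2\ell_3$ is a curve of degree $3$; by your own perspectivity observation these two curves meet in exactly $5\cdot 3=15$ distinct points, namely the $15$ nodes of $\Aset$ (distinctness uses that the lines through $B$ meet only at $B\notin\ell_1\cup\ell_2\cup\ell_3$ and that the pairwise intersections of $\ell_1,\ell_2,\ell_3$ are not in $\Xset$). Theorem \ref{thm:C-B} then says these $15$ points are essentially $(5+3-3)=5$-dependent, i.e., none of them has a $5$-fundamental polynomial within $\Aset$. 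But $\Aset$ is a subset of the $5$-poised set $\Xset$, hence $5$-independent --- contradiction. This is the paper's entire proof; your combinatorial route, even if it could be repaired, would be far longer and would need case analysis on how often $\ell_{BM_1}$ is used, which is precisely what this lemma is meant to feed into.
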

 \begin{proof}
 The equality \eqref{eq14} implies that $m_3(B) \leq 5.$ Assume by way of contradiction that five lines pass through $B$ and three nodes in $\Aset.$
Therefore these five lines intersect the three lines $\ell_1, \ell_2, \ell_3,$ at the $15$ nodes of $\Aset.$ Then, by Theorem \ref{thm:C-B}, these $15$ nodes are $5 + 3 - 3 = 5$-dependent, which is a contradiction.
 \end{proof}
 
Now we are in a position to start

\begin{proof}[Proof of Proposition \ref{thm:main3}] In view of Proposition \ref{prp:ub5} we divide the proof into the following three cases.
\hfill\break
\emph{Case 1.} Suppose that $\ell_{BM_1}$ is 5-node line used by six nodes from $\Aset.$ 

Denote the set of these six nodes by $\Aset_6\subset \Aset.$

We have that any node from $\Aset$ uses at least one line from $\Lset(B).$ Proposition \ref{Prop.14} implies that all  $3_{\Aset}$-node lines  from $\Lset(B),$ except $\ell_{BM_1},$ can be used by at most two nodes from $\Aset\setminus\Aset_6.$ 

From Lemma \ref{lem01}, we have that
 \begin{equation}
     15-6\leq  0m_1(B) + 1m_2(B)  + 2(m_3(B)-1).
 \end{equation}
 In view of \eqref{eq14} we get
 \begin{equation}\label{eq102}
    m_1(B) + 2m_2(B) + 3m_3(B)-6\leq 1m_2(B) + 2m_3(B) -2. 
 \end{equation}
Therefore we conclude that $m_1(B) + m_2(B) + m_3(B) \leq 4,$ or, in other words,  $3m_1(B) + 3m_2(B) +3m_3(B) \leq 12,$ which contradicts equality \ref{eq14}.\\

 \emph{Case 2.} Suppose that $\ell_{BM_1}$ is not $5$-node line.
 
Then, in view of the table \eqref{eq:table}, it can be used by at most three nodes of $\Aset.$ From Lemmas \ref{lem01} and \ref{lem05}, (ii),(iii), we have that
\begin{equation}\label{eq100}
    15\leq 1m_2(B) + 3m_3(B).
\end{equation}
In view of \eqref{eq14} we get
\begin{equation}\label{eq101}
    m_1(B) + 2m_2(B) + 3m_3(B) \leq m_2(B) + 3m_3(B).
\end{equation}
Hence $m_1(B) = m_2(B) = 0$ and $m_3(B) \ge 5,$ which contradicts Lemma \ref{lem99}. \\

\emph{Case 3.} Suppose that $\ell_{BM_1}$ is 5-node line used by  at most four nodes of $\Aset.$ 

In this case we have that  
\begin{equation*}
    15 \leq 1m_2(B)  + 3(m_3(B) - 1) + 4.
\end{equation*}
In view of \eqref{eq14} we get
\begin{equation}
    m_1(B) + 2m_2(B) + 3m_3(B) \leq 1m_2(B)  + 3m_3(B) + 1.
\end{equation} Hence $2m_1(B) + 2m_2(B) \leq 2.$ In view of \eqref{eq14} we have that
\begin{equation}
    3m_3(B_1) \geq 13, 
\end{equation}
which contradicts Lemma \ref{lem99}.
\end{proof}

 \subsection{The cases $(5\,{,}\,5\,{,}\,4\,{,}\,4\,{,}\,2)$ , $(5\,{,}\,5\,{,}\,4\,{,}\,3\,{,}\,3)$ and $(5\,{,}\,4\,{,}\,4\,{,}\,4\,{,}\,3)$}

Let us fix a node $A\in\Xset$ and consider the set of lines $\Lset(A).$ Let $n_k(A)$ be the number of $(k + 1 )$-node lines from $\Lset_A.$ In view of Assumption \ref{B} we have that
\begin{equation}\label{eq25}
    1n_1(A) + 2n_2(A) + 3n_3(A) + 4n_4(A)  =
  \# \bigl({\mathcal X}\setminus\{A\}\bigr) = 20 .
\end{equation}

Next we bring a result from \cite{HJZ14}. We present also the proof for the convenience.
\begin{lemma}[\cite{HJZ14}, Lemma 3.13]\label{lem20}
Assume that $\Xset$ is a $GC_5$ set with no maximal line. By
Proposition~\ref{thm:main3}, for no node of $\Xset$ the
m-d sequence is $(5,5,5,3,2)$.  Then the following hold.
\begin{enumerate}
\setlength{\itemsep}{0mm}
\item
There is no $3$-node line and $m$-node line is used exactly $m-1$ times, where $m=2, 4, 5.$ 
\item
No two lines used by the same node intersect at a node in
$\Xset.$ 
\end{enumerate}
\end{lemma}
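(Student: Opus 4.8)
The plan is to prove the two items of Lemma~\ref{lem20} by leveraging the results compiled in the table \eqref{eq:table} together with the newly established Proposition~\ref{thm:main3}. The key observation is that Proposition~\ref{thm:main3} removes the m-d sequence $(5,5,5,3,2)$ from consideration, and every upper bound in the table splits precisely according to whether this sequence occurs. Since by hypothesis it never occurs, we are entitled to use the stricter right-hand column of \eqref{eq:table} throughout.

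For item (i), I would first argue that no $3$-node line can be used at all. By the right-hand column of \eqref{eq:table}, a $3$-node line can be used by at most one node when no node has the $(5,5,5,3,2)$ m-d sequence. But by Lemma~\ref{lm} (equivalently \eqref{eq:kgeq2}), every used line must carry at least two primary nodes, so ``used by exactly one node'' is inconsistent with the structural requirements that forced Propositions~\ref{prp:ub2} and~\ref{prp:ub3}: in fact Proposition~\ref{prp:ub2} shows that a $3$-node line used by two nodes forces the $(5,5,5,3,2)$ sequence, which is now excluded, and a $3$-node line used by a single node would then be the sole node using it, contradicting the count~\eqref{eq25} unless it is not used at all. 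Hence no $3$-node line is used; since every line in $\Lset_A$ is by definition used, there simply is no $3$-node line among used lines. Then for $m=2,4,5$ I would read off the exact number of times an $m$-node line is used. For $m=2$, Proposition~\ref{prp:ub1} gives at most one use, and this is exactly $m-1=1$. For $m=5$, the right column gives at most $4=m-1$; here the lower bound comes from Proposition~\ref{prp:ub5} being unavailable (it forces the excluded sequence), so the relevant configuration is Corollary~\ref{cor:ub4} / Proposition~\ref{prp:ub3}, pinning the count at exactly four. For $m=4$, Proposition~\ref{prp:ub3} gives exactly three $=m-1$ uses. The phrase ``exactly $m-1$'' should follow by combining the upper bounds from the table with the matching lower bounds supplied by the cited structural propositions.

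For item (ii), suppose two lines $\ell$ and $\ell'$ used by the same node $A$ meet at a node $P\in\Xset$. Since $P$ lies on two used lines of $A$, it is a secondary node on (at least) one of them. The idea is to run the same fundamental-polynomial argument used repeatedly above (as in Lemma~\ref{lem:AL}(ii) and Lemma~\ref{lem01}(i)): the product of the two lines together with the forced primary-zero pattern on the remaining used lines would, via Corollary~\ref{cor22}, force $p_A^\star$ to be divisible by too many linear factors, or else would redistribute the nodes so as to recreate a $3$-node line that is used --- now ruled out by item (i) --- or an m-d sequence $(5,5,5,3,2)$, ruled out by Proposition~\ref{thm:main3}. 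I expect the cleanest route is a direct counting argument: if $\ell$ and $\ell'$ share a node in $\Xset$, then the total primary-node count across the five used lines drops below $N-1=20$, contradicting \eqref{nor}, unless some line is a $3$-node line, which item (i) forbids.

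The main obstacle will be item (ii): unlike the earlier lemmas, here the two intersecting lines are not anchored to a fixed configuration such as $\Lset_3$, so the argument must be made uniformly across all the admissible m-d sequences in \eqref{eq:5cases} that survive after excluding $(5,5,5,3,2)$. The delicate point is ensuring that whenever two used lines meet at a node, the resulting loss of one primary node (the shared intersection point is secondary on one line and can be primary on at most one) genuinely forces a $3$-node line or violates the exact-use counts of item (i); I would want to verify this case by case against the four remaining sequences $(5,5,4,4,2)$, $(5,5,4,3,3)$, $(5,4,4,4,3)$, $(4,4,4,4,4)$, checking that the primary-node bookkeeping cannot be balanced without introducing a used $3$-node line.
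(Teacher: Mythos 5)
Both items of your proposal have genuine gaps, and the missing idea is the same in each case: a double count of \emph{uses} of the lines through a fixed node. For item (i), the paper's proof fixes $A\in\Xset$ and lets $M(A)$ be the total number of times the lines of $\Lset(A)$ are used. The crucial lower bound is that every one of the $20$ nodes $B\in\Xset\setminus\{A\}$ uses at least one line through $A$ (since $p_B^\star$ vanishes at $A$ and is a product of five lines), so $M(A)\geq 20$. The table's third column gives $M(A)\leq n_1(A)+n_2(A)+3n_3(A)+4n_4(A)$, and comparing with \eqref{eq25} yields $20\leq M(A)\leq 20-n_2(A)$, forcing $n_2(A)=0$ \emph{and} equality everywhere, i.e., each line through $A$ attains its maximal number of uses. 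This single squeeze is what produces both conclusions of item (i): ``no $3$-node line'' (for every $A$, hence no $3$-node line exists at all --- note your version only yields the weaker ``no $3$-node line is \emph{used}'') and ``exactly $m-1$ uses.'' Your proposal has no mechanism for the lower bounds: Propositions \ref{prp:ub1}, \ref{prp:ub3} and Corollary \ref{cor:ub4} are pure \emph{upper} bounds (``at most''), so your claims that Proposition \ref{prp:ub3} ``gives exactly three'' uses, or that the count is ``pinned at exactly four,'' do not follow from the cited results; and your appeal to \eqref{eq25} to exclude a once-used $3$-node line is not an argument, since \eqref{eq25} is only a node count and says nothing about uses.

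For item (ii), your ``cleanest route'' rests on a false premise. The sum of the primary-node counts of the five lines used by $A$ is \emph{always} exactly $N-1=20$ by construction (that is precisely \eqref{nor}): when two used lines meet at a node of $\Xset$, that node is simply counted as primary for the earlier line and secondary for the later one, so nothing ``drops below $20$''; what happens instead is that some line's actual node count exceeds its primary count, which produces no contradiction (e.g., m-d sequence $(5,5,4,4,2)$ with the last line a $4$-node line having two secondary nodes is arithmetically consistent). The paper's argument is instead another use-count, now at the intersection node $B$: the $19$ nodes of $\Xset\setminus\{A,B\}$ each use at least one line through $B$, while $A$ uses at least two (the two intersecting lines), so $M(B)\geq 21$, contradicting $M(B)=20$ established in item (i). No case analysis over the surviving m-d sequences is needed, and indeed your proposed case-by-case primary-node bookkeeping cannot be made to close for the reason just given.
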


\begin{proof}
(i) Consider all the lines in $\Lset(A)$. From the third column of the table in~\eqref{eq:table}, it follows that for the total number $M(A)$ of uses of these lines, we have that
\begin{equation}\label{eq:20uses}
  M(A) \leq 1\,n_1(A) + 1\,n_2(A) + 3\,n_3(A) + 4\,n_4(A) \,.
\end{equation}
Since each node in $\Xset\setminus\{A\}$ uses at least one line through
$A$, we must have $M(A)\geq 20$. In view of the equality \eqref{eq25} we conclude that  $M(A)=20$ and $n_3(A)=0$.
  \par
Moreover, we deduce that any  line containing $m$ nodes
including $A$ has to be used exactly $m{-}1$ times, where $m=2, 4, 5$. Since the node $A$ is arbitrary, this is true for all
lines containing at least two nodes of $\Xset$.
  \par\smallskip
(ii) Assume conversely that two lines $\ell_1,\ell_2,$ used by a node $A\in\Xset$
intersect at a node $B\in\Xset$. Then each of the nodes in
$\Xset\setminus\{A,B\}$ uses at least one line through $B$, while the node $A$ uses at least two lines. Thus we have $M(A)\geq21,$ which is a contradiction.
\end{proof}
\begin{corollary}\label{Cor.20}
For no node in $\Xset$ the m-d sequence is $(5,5,4,3,3)$ or $(5,4,4,4,3)$.
\end{corollary}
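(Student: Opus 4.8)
The plan is to exploit the crucial structural fact, just established in Lemma~\ref{lem20}(i), that under Assumption~\ref{B} and Proposition~\ref{thm:main3} there are \emph{no $3$-node lines} in $\Xset$. The m-d sequences $(5,5,4,3,3)$ and $(5,4,4,4,3)$ both contain the entry $3$, so the natural strategy is to show that this forces the existence of a $3$-node line, contradicting Lemma~\ref{lem20}(i). The key observation is that the entries $k_i$ of an m-d sequence count \emph{primary} nodes on the lines $\ell_i$ of an m-line sequence, and by Lemma~\ref{lem20}(ii) no two of these lines meet at a node of $\Xset$; consequently each $\ell_i$ passes through exactly $k_i$ nodes of $\Xset$ \emph{in total}. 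Thus an m-d entry equal to $3$ means there is a genuine $3$-node line used by $A$.

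First I would fix a node $A\in\Xset$ whose m-d sequence is $(5,5,4,3,3)$ (resp.\ $(5,4,4,4,3)$) with associated m-line sequence $(\ell_1,\ldots,\ell_5)$. Next I would argue that the final line $\ell_5$, which carries $k_5=3$ primary nodes, is in fact a $3$-node line of $\Xset$. This is where Lemma~\ref{lem20}(ii) does the work: because no two lines used by $A$ intersect at a node of $\Xset$, the $k_5=3$ primary nodes on $\ell_5$ cannot be supplemented by secondary nodes lying on earlier lines, so $\ell_5\cap\Xset$ has exactly $3$ elements (the argument is symmetric for the other $3$-entry in the first sequence). Having produced a $3$-node line, I would invoke Lemma~\ref{lem20}(i) to reach the contradiction, and conclude that neither m-d sequence can occur.

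The main obstacle I anticipate is making the counting argument in the previous paragraph fully rigorous — specifically, ruling out the possibility that a node counted as primary for $\ell_5$ could coincide with, or that $\ell_5$ could acquire extra nodes that are primary for \emph{no} line because of the ordering convention. The resolution hinges on the fact that the $N-1=20$ non-$A$ nodes are partitioned by the primary-node counts $k_1+\cdots+k_5$, together with the no-intersection-at-a-node property: any node of $\Xset\setminus\{A\}$ on $\ell_5$ is either primary for $\ell_5$ or secondary for it, but being secondary would mean it is primary for some earlier $\ell_i$ and lies on $\ell_i\cap\ell_5$, i.e.\ at an intersection point of two used lines that is a node — exactly what Lemma~\ref{lem20}(ii) forbids. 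Once this is pinned down the corollary follows immediately.

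A cleaner packaging, which I would likely prefer for brevity, is to observe directly that the entry $k_i=3$ in the m-d sequence already asserts that $\ell_i$ is used by $A$ and has at least $3$ nodes; combined with Lemma~\ref{lem20}(i), which says every line through at least two nodes has total node count in $\{2,4,5\}$ and that $3$-node lines are absent, the value $k_i=3$ is simply incompatible with the admissible total counts. In either formulation the corollary is a short deduction from Lemma~\ref{lem20}, so I would keep the proof to a few lines and let the structural lemma carry the weight.
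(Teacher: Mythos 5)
Your proof is correct and is essentially identical to the paper's: both use Lemma~\ref{lem20}(ii) to rule out secondary nodes on the used lines, so the entry $3$ in the m-d sequence yields a genuine $3$-node line, contradicting Lemma~\ref{lem20}(i). One small caution: your ``cleaner packaging'' at the end still silently needs Lemma~\ref{lem20}(ii), since $k_i=3$ primary nodes alone would be compatible with a $4$- or $5$-node line having secondary nodes, so your first, fuller formulation is the right one to keep.
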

\begin{proof}
Suppose, that for a node $A \in \Xset,$ the m-d sequence is $(5,5,4,3,3)$ or $(5,4,4,4,3)$. In view of Lemma \ref{lem20}, (ii), there are no secondary nodes in the used lines. Thus the presence of $3$ the m-d sequence implies presence of a $3$-node line in an $m$-line sequence, which contradicts  Lemma \ref{lem20}, (i).
\end{proof}

\begin{proposition}\label{ve}
For no node in $\Xset$ the m-d sequence is $(5,5,4,4,2)$.
\end{proposition}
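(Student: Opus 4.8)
The plan is to argue by contradiction: I assume that some node $A\in\Xset$ has m-d sequence $(5,5,4,4,2)$ and extract a divisibility obstruction from the set of nodes using a fixed $5$-node line. Throughout I use that, by Proposition~\ref{thm:main3} and Corollary~\ref{Cor.20}, the only surviving m-d sequences are $(5,5,4,4,2)$ and $(4,4,4,4,4)$; I call a node of the first kind a \emph{type~I} node and of the second kind a \emph{type~II} node. By Lemma~\ref{lem20}(ii) no two lines used by a common node meet at a node of $\Xset$, so each used line carries no secondary nodes; hence a used line whose m-d entry is $k$ is precisely a $k$-node line. Consequently a type~I node uses exactly two $5$-node lines, two $4$-node lines and one $2$-node line, while a type~II node uses five $4$-node lines. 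In particular, if a node uses a $5$-node line $\wti\ell$ then, all five nodes of $\wti\ell$ being primary for it, its m-d sequence contains the entry $5$, so the node is type~I. Thus every node using a $5$-node line is of type~I.

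Next I would organize the type~I nodes into clusters. Let $A$ be type~I and $\ell$ one of the two $4$-node lines it uses. By Lemma~\ref{lem20}(i), $\ell$ is used exactly three times, so Proposition~\ref{prp:ub3} applies: the three users of $\ell$ share $\ell$, two $5$-node lines and one more $4$-node line, each having $\ell$-m-d sequence $(4,5,5,4,2)$. Since a type~I node has only two $5$-node factors, the two shared $5$-node lines are exactly the pair each user employs, and the second shared $4$-node line is the other $4$-node line used by each of them. A short check then shows that both $4$-node lines used by $A$ have the \emph{same} triple of users (the user set of the second shared $4$-node line, itself used three times, again equals the triple). Hence the \emph{cluster} of $A$ is well defined independently of the chosen $4$-node line, distinct clusters are disjoint, and they partition the type~I nodes into groups of three. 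The key structural fact I extract is that all three members of a cluster use exactly the same two $5$-node lines, and these are the only $5$-node lines any of them uses.

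Now I fix a $5$-node line $\wti\ell$ used by at least one node; such a line exists as soon as a type~I node exists, since type~I nodes use $5$-node lines. By Lemma~\ref{lem20}(i), $\wti\ell$ is used by exactly four nodes, and by the first paragraph all four are type~I. Partitioning these four users by their clusters, I observe an all-or-nothing dichotomy: a cluster contributes either all three of its members (when $\wti\ell$ is one of its two shared $5$-node lines) or none. Indeed, if a single member of a cluster uses $\wti\ell$, then $\wti\ell$ is one of that cluster's two shared $5$-node lines, and therefore every member of the cluster uses it. Consequently the number of users of $\wti\ell$ is a multiple of $3$; but it equals $4$, and $3\nmid 4$, a contradiction. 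This establishes Proposition~\ref{ve}.

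The main obstacle — and the place where Proposition~\ref{prp:ub3} does the real work — is establishing the clean ``all three or none'' dichotomy, i.e.\ that a single cluster member lying among the users of $\wti\ell$ forces the whole cluster in. This rests on the two facts that each type~I node uses exactly two $5$-node lines and that these coincide with its cluster's shared pair. The only delicate bookkeeping is verifying that the clusters genuinely form a partition (both $4$-node lines of a node yield the same cluster, and different clusters do not overlap); once that is in place, the divisibility contradiction closes the case immediately.
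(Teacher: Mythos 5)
Your proof is correct, but it takes a genuinely different route from the paper's. The paper argues directly from the node $A$ and its $2$-node line $\ell_5=\ell_{BC}$: since $\ell_1,\ell_2,\ell_3,\ell_4$ already cover $18$ of the $20$ nodes other than $A$, the fundamental polynomials of $B$ and $C$ factor as $p_B^\star=\ell_1\ell_2\ell_3\ell_4\ell_{AC}$ and $p_C^\star=\ell_1\ell_2\ell_3\ell_4\ell_{AB}$, so $A$, $B$, $C$ all use $\ell_1$; the fourth user $D$ of $\ell_1$ (which exists by Lemma \ref{lem20} and is again of your type I) repeats the same construction with its own $2$-node line $\ell_5'$, and Proposition \ref{prp:ub1} forces $\ell_5'$ to differ from the sides $\ell_{AB}$, $\ell_{AC}$, $\ell_{BC}$ of the triangle, so $\ell_5'$ supplies a fifth user of $\ell_1$ --- contradicting the fact that a $5$-node line is used exactly four times. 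You reach a contradiction with the same count (users of a $5$-node line), but through Proposition \ref{prp:ub3} instead: applying it to the $4$-node lines, you organize the type I nodes into disjoint triples (``clusters'') sharing their two $5$-node and two $4$-node lines, so that the user set of any $5$-node line is a union of whole clusters, hence has cardinality divisible by $3$, incompatible with being exactly $4$. Notably, the paper's triple $\{A,B,C\}$ is precisely one of your clusters, built from the $2$-node-line side rather than from the $4$-node-line side via Proposition \ref{prp:ub3}. Your route costs a heavier input (Proposition \ref{prp:ub3}, whereas the paper needs only Proposition \ref{prp:ub1} beyond Lemma \ref{lem20}) and some bookkeeping to check that clusters are well defined and disjoint --- which you do correctly --- but it buys a cleaner structural picture and a slick divisibility contradiction; both arguments are sound.
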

\begin{proof}
Assume that for a node $A\in\Xset$ some m-line sequence $(\ell_1,\ell_2,\ell_3,\ell_4,\ell_5)$ implies the m-d sequence $(5,5,4,4,2)$. In view of Lemma \ref{lem20}, (ii), the lines $\ell_1,...,\ell_5,$ contain exactly ${5, 5, 4, 4, 2,}$ nodes, respectively. Denote by $B$ and $C$ the two nodes in the line $\ell_5.$ Then we have 
\begin{equation*}
  p_B^\star= \ell_1\,\ell_2\,\ell_3\,\ell_4\,\ell_{AC}
  \quad\text{and}\quad
  p_C^\star= \ell_1\,\ell_2\,\ell_3\,\ell_4\,\ell_{AB}).
\end{equation*}
In view of Lemma \ref{lem20} the line $\ell_1$ is used by exactly four nodes of $\Xset$. Therefore, there exists a node $D\in\Xset\setminus\{A,B,C\},$ which is using the line $\ell_1.$ 
  
  In view of  \eqref{eq:5cases}, Proposition \ref{thm:main3}, and Corollary \ref{Cor.20}, for the node $D\in\Xset$ some m-line sequence $(\ell_1,\ell_2',\ell_3',\ell_4',\ell_5')$ yields the m-d sequence $(5,5,4,4,2)$.
  
  Now, as above, we have that the two nodes in the line $\ell_5'$ use the line $\ell_1.$ In view of Proposition \ref{prp:ub1}, the line $\ell_5',$ used by the node $D,$ cannot coincide with the lines $\ell_{AB}, \ell_{AC}$ or $\ell_{BC}$. Therefore $\ell_5'$ contains a node different from $A,B,C,D.$ Hence, the line $\ell_1$ is used at least five times, which is a contradiction. 
\end{proof}

\subsection{Proof of theorem \ref{thmain2}}

What is left to complete the proof of Theorem \ref{thmain2} is the following
\begin{proposition}\label{Cor.21}
For no node in $\Xset$ the m-d sequence is $(4,4,4,4,4)$.
\end{proposition}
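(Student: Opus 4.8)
The plan is to argue by contradiction and collapse the statement to a divisibility obstruction. Assume that some node of $\Xset$ has m-d sequence $(4,4,4,4,4)$. By \eqref{eq:5cases} these five tuples are the only m-d sequences possible under Assumption \ref{B}, and Proposition \ref{thm:main3}, Corollary \ref{Cor.20} and Proposition \ref{ve} have already excluded the other four. Hence every node of $\Xset$ must have m-d sequence exactly $(4,4,4,4,4)$, and in particular each node uses precisely five lines.

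First I would pin down the geometry of the used lines. Since Proposition \ref{thm:main3} guarantees that no node has m-d sequence $(5,5,5,3,2)$, Lemma \ref{lem20} is available. By part (ii) no two lines used by a common node meet at a node of $\Xset$, so a used line carries no secondary nodes; consequently the number of primary nodes on any used line equals the total number of nodes lying on it. Therefore the five lines used by each node are genuine $4$-node lines.

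Next I would show that every line through at least two nodes of $\Xset$ is a $4$-node line. By Lemma \ref{lem20}(i) there are no $3$-node lines, and by Assumption \ref{B} there are no $6$-node (maximal) lines, so it remains to exclude $2$- and $5$-node lines. If a $5$-node line existed, Lemma \ref{lem20}(i) would force it to be used (four times); by the previous paragraph its user would then carry a $5$ in its m-d sequence, contradicting $(4,4,4,4,4)$. The same argument applied to a $2$-node line, used once, produces a forbidden $2$. Hence every line joining two nodes of $\Xset$ passes through exactly four nodes.

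The contradiction is then combinatorial. Fix any node $A$; every other node lies with $A$ on a unique line, which by the previous step contains $A$ together with exactly three further nodes, so the lines through $A$ partition the remaining $20$ nodes into blocks of size $3$. Since $3 \nmid 20$, this is impossible; equivalently, the configuration would be a $2$-$(21,4,1)$ design, whose replication number $r=(21-1)/(4-1)=20/3$ is not an integer. This refutes Assumption \ref{B}, so no node has m-d sequence $(4,4,4,4,4)$. I expect the only genuine work to be the third step: certifying that every line through two nodes has exactly four nodes requires combining both parts of Lemma \ref{lem20} with the observation that a used $2$- or $5$-node line would inject a forbidden entry into some node's m-d sequence. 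Once that structural fact is in hand, the divisibility obstruction is immediate.
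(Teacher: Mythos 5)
Your proof is correct and takes essentially the same route as the paper: eliminate the other four m-d sequences so every node has $(4,4,4,4,4)$, use Lemma \ref{lem20} (both parts) to force every line through two nodes of $\Xset$ --- in particular every line through a fixed node $A$ --- to be a $4$-node line, and then obtain the divisibility contradiction that the $20$ nodes of $\Xset\setminus\{A\}$ cannot be partitioned into blocks of $3$, i.e., $3n_3(A)=20$ is impossible. Your write-up is in fact cleaner than the paper's printed proof, which contains a typo (it asserts $n_1(A)=n_2(A)=n_3(A)=n_4(A)=0$ where it means $n_1(A)=n_2(A)=n_4(A)=0$).
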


\begin{proof}  Let us fix a node $A\in\Xset$. In view of  \eqref{eq:5cases}, Propositions \ref{thm:main3}, \ref{ve} and Corollary \ref{Cor.20}, for the node $A$, m-d sequence is $(4,4,4,4,4)$. Thus, in view of Lemma \ref{lem20}, (ii), all used lines are $4$-node lines. Therefore, in view of Lemma \ref{lem20}, (i), we conclude that $n_1(A)=n_2(A)=n_3(A)=n_4(A)=0.$
Now, the equality  \eqref{eq25} implies that $3n_3(A) = 20$, which is not possible.
\end{proof}


\end{document}